\documentclass[11pt,leqno,oneside,letterpaper]{amsart}

\usepackage[T1]{fontenc}
\usepackage[utf8]{inputenc}
\usepackage{lmodern}
\usepackage{eulervm}
\usepackage{amsmath,amssymb,amsthm,mathtools}
\usepackage[margin=1.15in]{geometry}
\usepackage{booktabs,array}
\usepackage{enumitem}
\usepackage{xurl}
\usepackage[colorlinks,linkcolor=blue,citecolor=blue]{hyperref}
\usepackage[nameinlink,noabbrev]{cleveref}
\usepackage{orcidlink}
\hypersetup{pdftitle={Prescribed Abscissae on Congruent-Number Curves over Simplest Cubic Fields},pdfauthor={Junyu Lu},pdfsubject={Prescribed coordinates and Galois trace on elliptic curves},pdfkeywords={elliptic curve, simplest cubic field, congruent number, Galois trace, Pell equation}}

\newtheorem{theorem}{Theorem}
\newtheorem{proposition}[theorem]{Proposition}

\newtheorem{lemma}[theorem]{Lemma}
\theoremstyle{remark}

\newtheorem{example}[theorem]{Example}

\newcommand{\QQ}{\mathbb Q}
\newcommand{\ZZ}{\mathbb Z}

\newcommand{\FF}{\mathbb F}

\newcommand{\Norm}{\operatorname{N}}
\newcommand{\Tr}{\operatorname{Tr}}
\newcommand{\rank}{\operatorname{rank}}
\newcommand{\tors}{\mathrm{tors}}

\title[Prescribed abscissae over cubic fields]{Prescribed Abscissae on Congruent-Number Curves\\over Simplest Cubic Fields}
\author[J.~Lu]{Junyu Lu\,\orcidlink{0009-0001-4973-3198}}
\address{Department of Mathematics, Yang-En University, Majia Town, Luojiang District, Quanzhou City, Fujian Province, China, 362014}
\email{jy.lu@outlook.com}
\date{}
\subjclass[2020]{Primary 11G05, 11R16; Secondary 11D25, 11D09}
\keywords{elliptic curve, simplest cubic field, congruent number, Galois trace, Pell equation}

\begin{document}
\begin{abstract}
      For integers $t\geq -1$, let $\theta_t$ be the largest real root of the Shanks polynomial $g_t(X)=X^3-tX^2-(t+3)X-1$ and put $K_t=\QQ(\theta_t)$. We classify the points on $E_n:y^2=x^3-n^2x$ over $K_t$ with abscissa $\theta_t-1$ when $n$ is a positive integer and $E_n(\QQ)$ has rank zero. We determine the four pairs $(d,t)$ with $d>0$ rational for which $(\theta_t-1)/d^2$ is an abscissa on $E_3(K_t)$. We also exclude the abscissa $\theta_t-1$ on $E_5(K_t)$ and $E_6(K_t)$ and prove that only finitely many parameters $t$ admit this abscissa for each fixed positive integer $n$.
\end{abstract}
\maketitle

\section{Introduction}

For an integer $t\geq-1$, consider the Shanks polynomial
\[
      g_t(X)=X^3-tX^2-(t+3)X-1
\]
and let $\theta_t$ be its largest real root. Set
\[
      K_t=\QQ(\theta_t),\qquad \Delta_t=t^2+3t+9.
\]
By the rational root theorem, the only possible rational roots are $1$ and $-1$. Since $g_t(-1)=1$ and $g_t(1)=-2t-3\ne0$, the polynomial is irreducible over $\QQ$. The discriminant formula $\operatorname{disc}(g_t)=\Delta_t^2>0$ shows that it has three distinct real roots. An irreducible cubic with square discriminant has cyclic Galois group of order three, so $K_t/\QQ$ is a totally real cyclic cubic extension. Here $\Delta_t^2$ is the discriminant of the polynomial $g_t$, or equivalently of the order $\ZZ[\theta_t]$; this order need not be the full ring of integers of $K_t$. Shanks's study \cite{Shanks1974} of this family includes its Galois action, units, regulators, and class numbers. The identity
\[
      g_{-t-3}(-X-1)=-g_t(X)
\]
explains the usual restriction $t\geq-1$. Distinct parameters in this range can still define the same field. Throughout this article, $t$ is an integer with $t\geq -1$ unless otherwise specified.

For a positive integer $n$, let
\[
      E_n:\quad y^2=x^3-n^2x.
\]
We study points on $E_n$ over simplest cubic fields $K_t$ with a prescribed abscissa.

The conjugation formulas give the three roots of $g_t(X)$ as
\[
      \theta_t,\qquad -\frac1{\theta_t+1},\qquad
      -\frac{\theta_t+1}{\theta_t}.
\]
We choose the generator $\sigma$ with $\sigma(\theta_t)=-1/(\theta_t+1)$. Since the group law on $E_n$ is defined over $\QQ$, the sum of the three conjugates of a point is fixed by $\sigma$. For $P\in E_n(K_t)$, the trace map $\Tr:E_n(K_t)\to E_n(\QQ)$ is given by
\[
      \Tr(P)=P+\sigma P+\sigma^2P\in E_n(\QQ).
\]
The sum here uses the elliptic-curve group law. For elements of $K_t$, we write $\Tr_{K_t/\QQ}$ and $\Norm_{K_t/\QQ}$ for the field trace and norm, respectively; in particular, $\Tr_{K_t/\QQ}(\alpha)=\alpha+\sigma(\alpha)+\sigma^2(\alpha)$ uses addition in the field.

The identity of the elliptic curve is denoted by $O$. The condition $x(P)=\theta_t-1$ depends on the chosen root and parameter, not just on the isomorphism class of $K_t$. Our first result determines all points with this abscissa when the rational rank is zero.

\begin{theorem}\label{thm:main-classification}
      Suppose that $t\geq -1, n>0$ are integers and $\rank E_n(\QQ)=0$. There exists a point in $E_n(K_t)$ with abscissa $\theta_t-1$ if and only if there is a positive integer $v$ such that
      \[
            v^2=2n^2-9,\qquad
            t=(n+v)^2+3\quad\text{or}\quad t=(n-v)^2+3.
      \]
      At $t=(n+v)^2+3$, the ordinates are exactly $\pm((n+v)\theta_t+n)$; at $t=(n-v)^2+3$, they are exactly $\pm((n-v)\theta_t+n)$. Every such point has trace zero and infinite order, and its conjugates generate a subgroup of rank two.
\end{theorem}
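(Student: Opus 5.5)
The plan is to reduce everything to matching coefficients of cubic polynomials, using that the Galois trace lands in the finite group $E_n(\QQ)=E_n(\QQ)_\tors=\{O,(0,0),(n,0),(-n,0)\}$ when $\rank E_n(\QQ)=0$.

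First I write $x=\theta_t-1$ and expand $g_t(x+1)$:
\[
g_t(x+1)=x^3+(3-t)x^2-3tx-(2t+3).
\]
This is the minimal polynomial of $\theta_t-1$. Let $P=(\theta_t-1,y)\in E_n(K_t)$, and note $y\neq 0$ since $\theta_t-1$ is irrational.

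\textbf{Trace zero.} Suppose first that $\Tr(P)=O$. The three conjugates $P,\sigma P,\sigma^2P$ have distinct abscissae, and their sum is $O$, so they lie on a single non-vertical line $y=\lambda x+\mu$. This line is $\sigma$-stable, hence $\lambda,\mu\in\QQ$. Then $x^3-n^2x-(\lambda x+\mu)^2$ is monic with roots the three conjugate abscissae, so it equals $g_t(x+1)$. Comparing coefficients gives
\[
\lambda^2=t-3,\qquad \mu^2=2t+3,\qquad n^2+2\lambda\mu=3t .
\]
Hence $\lambda,\mu$ are integers. Putting $m=\lambda$, we get $t=m^2+3$, $\mu^2=2m^2+9$, and $n^2=3m^2+9-2m\mu$, that is, $(\mu-m)^2=n^2$.

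Choosing signs so that $\mu-m=n$ and substituting $\mu=m+n$ into $\mu^2=2m^2+9$ gives $m^2-2nm-n^2+9=0$. Therefore $m=n\pm v$ with $v^2=2n^2-9$. The ordinate is then
\[
y=m(\theta_t-1)+\mu=m\theta_t+n,
\]
which matches the statement. Conversely, these values make the polynomial identity hold, so such points exist. The remaining sign choices only produce $-P$.

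\textbf{Nonzero torsion trace.} Next suppose $\Tr(P)=T$ with $T$ of order $2$. Since $T$ is rational and $3T=T$, the point $Q=P+T$ satisfies $\Tr(Q)=T+3T=O$. So the collinearity argument applies to $Q$, whose abscissa is
\[
x(Q)=-\frac{n^2}{\theta_t-1}\ \text{ for } T=(0,0),\qquad x(Q)=\pm n\,\frac{\theta_t-1\pm n}{\theta_t-1\mp n}\ \text{ for } T=(\pm n,0).
\]
I would compute the minimal polynomial of $x(Q)$ by a Möbius substitution in $g_t(x+1)$, normalize it to be monic, and match it against $x^3-\lambda^2x^2-(n^2+2\lambda\mu)x-\mu^2$.

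For $T=(0,0)$ this forces $-n^6/\Norm_{K_t/\QQ}(\theta_t-1)=-n^6/(2t+3)$ to equal $-\mu^2$, together with analogous conditions on the other two coefficients. For $T=(\pm n,0)$ it forces norm conditions on $\theta_t-1\mp n$, namely on $g_t(1\pm n)$.

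This case is the main obstacle: I must show that the resulting system of Diophantine equations in $t,n,\lambda,\mu$ has no solutions. I would first try to eliminate $\lambda,\mu$ and then derive a contradiction by congruences modulo small primes, or by size estimates. Failing that, I would show that any solution would yield a rational point of infinite order on $E_n$, contradicting $\rank E_n(\QQ)=0$.

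\textbf{Infinite order and rank two.} To see that $P$ has infinite order, I would show $E_n(K_t)_\tors=E_n(\QQ)_\tors$. Since $[K_t:\QQ]=3$ is odd, no new $2$-power torsion appears; odd torsion can be excluded by reducing at two suitable split or inert primes of good reduction and counting points. As $y\neq0$ and $x(P)\notin\QQ$, $P$ is then non-torsion.

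Finally, because $P+\sigma P+\sigma^2P=O$, the $\ZZ[\sigma]$-module generated by $P$ is a quotient of $\ZZ[\sigma]/(1+\sigma+\sigma^2)\cong\ZZ[\omega]$. Modulo torsion this quotient is torsion-free and nonzero. Every nonzero ideal of $\ZZ[\omega]$ has finite index, so a proper torsion-free quotient would be finite, hence zero. Therefore the quotient is all of $\ZZ[\omega]$, and the subgroup generated by the conjugates of $P$ has rank exactly $2$.
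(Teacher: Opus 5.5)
You treat the trace-zero case as the paper does, comparing $x^3-n^2x-(\lambda x+\mu)^2$ with the minimal polynomial of $\theta_t-1$. Your $\ZZ[\omega]$-module argument for rank two is a valid, somewhat cleaner substitute for the paper's determinant computation.

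The genuine gap is the step you flag yourself: ruling out $\Tr(P)\in E_n[2]\setminus\{O\}$. This is the heart of the theorem. Without it the ``only if'' direction fails, because a point with trace $(n,0)$ need not have its conjugates on a rational line and need not produce $v^2=2n^2-9$. You offer only a list of things you would try. Your fallback, extracting a rational point of infinite order, cannot be the mechanism. The paper proves this exclusion for every positive rational $n$ with no rank hypothesis, so the contradiction has to be Diophantine.

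Here is what is missing. First, your constant-term matching has a sign error: the product of the conjugates of $x(Q)$ must equal $\mu^2$, not $-\mu^2$. So for $T=(0,0)$ you get $-n^6/(2t+3)=\mu^2$, which is already impossible because $2t+3>0$.

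The paper handles all three cases by pulling the line back instead of computing minimal polynomials of M\"obius transforms. From $y(Q)=\ell_1x(Q)+\ell_0$ and the translation formulas one gets $y=J(z)$ with $J(Z)=j_2(Z-r_2)(Z+\beta)$. Here $j_2\neq0$, since otherwise $y$ would be linear in $z$ and the trace would be $O$. Hence $J(Z)^2-f_n(Z)=j_2^2(Z-r_2)q_t(Z)$. Put $\lambda=j_2^{-2}>0$. The constant coefficient then gives $\lambda n^2=-(2t+3)$ for $r_2=0$ and $n\beta^2=-(2t+3)$ for $r_2=-n$, both impossible.

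For $r_2=n$ the same coefficient only gives $t=(n\beta^2-3)/2$, so no sign argument exists and this is the real case. Eliminating $t$ and $\lambda$ yields
\[
(2+3n-n^2)\beta^2-8n\beta+(2n^2+9n-9)=0,
\]
whose discriminant is $4D(n)$ with $D(n)=2n^4+3n^3-24n^2+9n+18$. Writing $n=h/k$ in lowest terms, the integer $k^4D(h/k)$ (or its quotient by $9$ when $3\mid h$) is $\equiv2\pmod 3$. So $D(n)$ is never a rational square, which excludes $r_2=n$. An argument of this kind must replace your ``main obstacle'' paragraph.

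A smaller point: your plan to prove $E_n(K_t)_{\tors}=E_n[2]$ by reducing at ``two suitable primes'' is not uniform in $t$ as stated. The paper instead cites J\k{e}drzejak's torsion lemma for real fields of odd degree.
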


The main step is to exclude nonzero two-torsion trace. The exclusion holds even when $n$ is positive rational and without a rank assumption. When the rational rank is zero, every rational point is two-torsion, so the exclusion forces the trace to vanish. The conjugates then lie on a rational line, and coefficient comparison gives the Pell equation above.

We next fix $E_3$ and allow a rational square in the denominator of the abscissa. Using the same equation to define $E_n$ also for positive rational $n$, scaling gives the abscissa $\theta_t-1$ on another curve in the family.

\begin{theorem}\label{thm:main-fixed}
      Suppose that $d$ is a positive rational number and $t\geq-1$ is an integer. There exists a point in $E_3(K_t)$ with abscissa $(\theta_t-1)/d^2$ if and only if
      \[
            (d,t)\in\{(1,3),(1,39),(13,44103),(13,1498179)\}.
      \]
\end{theorem}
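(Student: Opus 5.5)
The plan is to reduce to the setting of \Cref{thm:main-classification} by scaling, and then solve the resulting Pell-type condition using a classical quartic result. Suppose $P=(x,y)\in E_3(K_t)$ has $x=(\theta_t-1)/d^2$. The map $(x,y)\mapsto(d^2x,d^3y)$ sends $E_3$ isomorphically over $\QQ$ onto $E_{3d^2}:Y^2=X^3-9d^4X$. It carries $P$ to a point with abscissa $\theta_t-1$. This isomorphism is defined over $\QQ$, so it commutes with $\sigma$ and with the trace. Since $3$ is not a congruent number, $\rank E_3(\QQ)=0$ and $E_3(\QQ)=E_3[2]$. Hence the trace of the scaled point is a rational two-torsion point on $E_{3d^2}$.

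Next I would invoke the exclusion step described after \Cref{thm:main-classification}. It rules out nonzero two-torsion trace for abscissa $\theta_t-1$ on $E_n$, even for positive rational $n$ and with no rank assumption. Applying it with $n=3d^2$ forces the trace to vanish. The three conjugates then lie on a rational line $Y=aX+b$. Their abscissae $\theta_t-1$, $\sigma(\theta_t)-1$, $\sigma^2(\theta_t)-1$ are therefore the roots of $X^3-n^2X-(aX+b)^2$, which must equal $g_t(X+1)$. The coefficient comparison is the same as in the proof of \Cref{thm:main-classification}, and it only uses that $n$ is rational. It yields a rational $v\ge 0$ with
\[
v^2=2n^2-9,\qquad t=(n+v)^2+3\ \text{or}\ t=(n-v)^2+3 .
\]
With $n=3d^2$ and $v=3w$, this becomes $w^2=2d^4-1$ with $w\in\QQ$. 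Moreover $s:=3d^2\pm3w$ satisfies $s^2=t-3\in\ZZ$, so $s\in\ZZ$.

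Then I would show that $d$ is an integer. The other root $s'=3d^2\mp3w=6d^2-s$ satisfies
\[
ss'=9d^4-9w^2=9(1-d^4).
\]
Write $d=p/q$ in lowest terms. The left side $s(6d^2-s)$ has denominator dividing $q^2$, because $s$ is an integer. The right side has denominator exactly $q^4$ if $q>1$. Hence $q=1$, and then $w$ is an integer as well.

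It remains to find the positive integer solutions of $w^2=2d^4-1$. By Ljunggren's theorem these are exactly $d=1,13$, with $w=1,239$. This is the deep input and the main obstacle; I would cite it rather than reprove it. For $d=1$ we get $s\in\{6,0\}$, giving $t\in\{39,3\}$. For $d=13$ we get $s=3(169\pm239)\in\{1224,-210\}$, giving $t\in\{1498179,44103\}$. For the converse, in each of the four cases I would exhibit the point directly. By \Cref{thm:main-classification} in the rational-$n$ form, $E_{3d^2}$ has the point $(\theta_t-1,\ s\theta_t+3d^2)$. Scaling back by $d$ gives a point of $E_3(K_t)$ with abscissa $(\theta_t-1)/d^2$; the identity $(s\theta_t+n)^2=(\theta_t-1)^3-n^2(\theta_t-1)$ modulo $g_t$ can also be checked directly.
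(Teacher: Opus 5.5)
Your outline matches the paper's proof: scale to $E_{3d^2}$, use Theorem~\ref{thm:obstruction} to force trace zero, compare coefficients to get $(u-n)^2=2n^2-9$, then apply Ljunggren. However, the step showing $d\in\ZZ$ fails as written. With $d=p/q$ in lowest terms, $9(1-d^4)=9(q^4-p^4)/q^4$, and $q^4-p^4$ is prime to $q$. So the reduced denominator is $q^4/\gcd(9,q^4)$, not $q^4$: when $3\mid q$ the factor $9$ cancels. For $q=3$ the right side has denominator exactly $9=q^2$. Your comparison (``denominator dividing $q^2$'' versus ``exactly $q^4$'') then gives no contradiction, and the case $d=p/3$ with $3\nmid p$ is left open.

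The repair is short. One option is to sharpen the bound on the left side. When $3\mid q$, $s(6d^2-s)=6p^2s/q^2-s^2$ has denominator dividing $q^2/3$, while the right side has denominator $q^4/9$; this forces $q^2\le 3$, a contradiction. A simpler option is to follow the paper. Since $s=u\in\ZZ$, the relation $(s-n)^2=2n^2-9$ says that $n=3d^2$ is a rational root of the monic integral polynomial $X^2+2sX-s^2-9$, so $n\in\ZZ$; this is the integrality clause of Proposition~\ref{thm:trace-pell}. Then $b^2\mid 3a^2$ for $d=a/b$ gives $b=1$. You could also dispose of $q=3$ directly, since $(9w)^2=2p^4-81$ is impossible modulo $3$. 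With this fixed, the rest of your argument is fine, including citing the classical fact that $3$ is not a congruent number in place of the paper's explicit $2$-descent.
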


Scaling to $E_{3d^2}$ reduces this assertion to the classical equation $B^2+1=2d^4$. Every admissible rational scaling factor is integral. We give the ordinates and distinguish the four fields directly.

The rank-zero hypothesis in Theorem~\ref{thm:main-classification} forces the rational trace to be torsion. Without this hypothesis, a nontorsion trace is not ruled out. Nevertheless, any point with the prescribed abscissa gives $\rank E_n(K_t)\geq\rank E_n(\QQ)+2$, and its conjugates generate a subgroup of rank two or three according as its trace is zero or nonzero (Proposition~\ref{prop:rank}).

\begin{theorem}\label{thm:main-positive-rank}
      For every integer $t\geq-1$, neither $E_5(K_t)$ nor $E_6(K_t)$ contains a point with abscissa $\theta_t-1$. For every fixed positive integer $n$, only finitely many integers $t\geq-1$ admit a point with this abscissa on $E_n(K_t)$.
\end{theorem}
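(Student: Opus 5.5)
The plan is to replace the condition ``$\theta_t-1$ is an abscissa'' by a necessary norm condition that depends only on $t$. This condition turns finiteness into a statement about integral points on a curve of genus one. For $n=5,6$, the finitely many survivors would then be eliminated directly.

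\emph{Step 1: a norm condition.} Since $\prod_i(X-\theta_t^{(i)})=g_t(X)$, we have $\Norm_{K_t/\QQ}(\theta_t-a)=-g_t(a)$ for rational $a$. If $P=(\theta_t-1,y)\in E_n(K_t)$, then
\[
      y^2=(\theta_t-1)(\theta_t-1-n)(\theta_t-1+n).
\]
Taking norms gives
\[
      \Norm_{K_t/\QQ}(y)^2=\bigl(-g_t(1)\bigr)\bigl(-g_t(1+n)\bigr)\bigl(-g_t(1-n)\bigr).
\]
Each $-g_t(m)=t(m^2+m)-m^3+3m+1$ is a linear polynomial in $t$. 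Explicitly, $-g_t(1)=2t+3$, and the other two factors have leading coefficients $n^2+3n+2$ and $n^2-n$. Hence $(t,\Norm(y))$ is an integral point on
\[
      C_n:\ w^2=F_n(t)
\]
with $F_n$ a cubic. I would first check that, for $n\geq2$, the three linear factors have distinct roots, so that $C_n$ is an elliptic curve. I would also dispose of $n=1$ separately: there the third factor is the constant $1$, so the norm condition is not enough, and I would use a direct argument instead. One option is to reduce $y^2=(t-3)\theta_t^2+(t+6-n^2)\theta_t+n^2$ modulo $g_t$ and compare coefficients, or to invoke the trace argument of Theorem~\ref{thm:main-classification} together with Siegel applied to the resulting conditions. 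Siegel's theorem then gives finitely many integral $t$ for each $n\geq2$, which proves the finiteness assertion.

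\emph{Step 2: $n=5$ and $n=6$.} For $n=5$ the cubic is
\[
      F_5(t)=(2t+3)(42t-197)(12t+53),
\]
and analogously for $n=6$. I would compute all integral points on these two elliptic curves with a rigorous integral-points algorithm, based on elliptic logarithms as in Magma or Sage. Before doing so, I would use local conditions (congruences modulo small primes, positivity for $t\geq-1$) to cut down the candidates.

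\emph{Step 3: eliminating survivors.} For each surviving $t\geq-1$, I would show directly that
\[
      (t-3)\theta_t^2+(t+6-n^2)\theta_t+n^2
\]
is not a square in $K_t$. One way is to factor $X^2-\bigl((t-3)\theta_t^2+(t+6-n^2)\theta_t+n^2\bigr)$ over $K_t$. Another is to find a prime of $K_t$ of odd valuation, or a residue field in which the element is a nonsquare. As an independent check on part of the cases, the trace-zero argument behind Theorem~\ref{thm:main-classification} does not use the rank hypothesis. It would require $2n^2-9$ to be a square, and $41$ and $63$ are not squares. Combined with the exclusion of nonzero two-torsion traces, this leaves only the nontorsion-trace case, which is exactly what the norm argument handles.

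The main obstacle is Step 2: determining the integral points on $C_5$ and $C_6$ provably. This needs the Mordell--Weil groups, which may have positive rank, together with explicit lower bounds for linear forms in elliptic logarithms. I would try a computer-algebra computation first. If the curves have rank zero, or have convenient $2$-isogeny structure, I would instead look for a hand descent using the factorization of $F_n$ into linear factors.
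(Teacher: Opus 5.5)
\textbf{There is a genuine gap in the finiteness part, at $n=3$.} Your general leading coefficient for $-g_t(1-n)$ is miscomputed: it should be $(1-n)^2+(1-n)=(n-1)(n-2)$, not $n^2-n$.

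\begin{itemize}
\item At $n=2$ the third norm factor is also constant: $-g_t(-1)=-1$. This case is harmless, since the product is then negative for $t\ge2$.
\item At $n=3$ the third factor is $-g_t(-2)=2t+3=-g_t(1)$. So the distinctness check you plan for all $n\ge2$ fails.
\end{itemize}

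For $n=3$ the norm curve is $w^2=(2t+3)^2(20t-51)$. This is a singular curve of genus zero with infinitely many integral points: every $t$ with $20t-51$ a square works, e.g.\ $t=3,5,11,17,\dots$. Siegel's theorem therefore gives nothing here. The norm condition is also genuinely too weak: at $t=5$ it holds, yet no point exists.

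You need an extra input for $n=3$. The paper uses the $2$-isogeny descent showing $\rank E_3(\QQ)=0$. Then the trace of any such point is $2$-torsion, Theorem~\ref{thm:obstruction} makes it zero, and Proposition~\ref{thm:trace-pell} forces $t\in\{3,39\}$.

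Your treatment of $n=1$ (and now $n=2$) is also left vague. The clean fix is the real-embedding sign argument of Lemma~\ref{lem:real-sign-bound}. At the negative conjugate abscissa $-2-1/\theta_t$, positivity of $x(x^2-n^2)$ forces $n>2$. Only for $n\ge4$ is $\Psi_n$ a separable cubic (its discriminant contains the factor $(n^2-9)^2$). In that range your Siegel step coincides with the paper's.

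\textbf{On $n=5,6$.} Your plan is to enumerate integral points on $C_5,C_6$ rigorously by elliptic logarithms and then test the survivors. This is a legitimate route, but as written it depends on a computation not yet done, and it is avoidable.

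\begin{itemize}
\item For $n=6$, congruences on the norm equation already give a contradiction. By the sign lemma $t\ge6$; below that the product is negative. Then the product of $2t+3$, $56t-321$, $20t+111$ is $\equiv6t+3\pmod8$. This forces $t\equiv1\pmod4$ and $20t+111\equiv3\pmod8$. The identities with $-81$ and $-3^5\cdot13$ show the squarefree part of $20t+111$ divides $39$, so it must be $3$. That gives $\rho^2\equiv2\pmod5$, which is impossible.
\item For $n=5$ the norm equation does have solutions (e.g.\ $t=31$), so your Step~3 is really needed. The paper makes it uniform. Square-class bookkeeping on the three linear factors forces $t\equiv1\pmod5$. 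For every such $t$, the simple root $3$ of $g_t$ modulo $5$ gives an embedding $K_t\hookrightarrow\QQ_5$ under which the ordinate square is $\equiv3\pmod5$, a nonsquare.
\end{itemize}

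The $n=5$ argument is exactly your residue-field test, applied to all candidates at once, so no integral-point computation is needed.
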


The $E_5$ assertion follows from square classes and a local obstruction at $5$; the $E_6$ assertion follows from the norm equation alone. For finiteness, Siegel's theorem \cite[IX.3, Corollary~3.2.2]{Silverman2009} applies to the same norm equation. These assertions hold without assuming that the rational rank is zero.

Rational lines through three conjugate points are a standard construction in cyclic cubic rank growth; Kisilevsky's account \cite[Section~3]{Kisilevsky2012} explains this geometry. For an elliptic curve $E/F$ over a number field $F$, Fearnley, Kisilevsky, and Kuwata \cite[Theorem~5.1]{FearnleyEtAl2012} prove rank growth in infinitely many cyclic cubic extensions of $F$ when $E$ has a nontorsion point over a cyclic extension of $F$ of degree dividing three. Girondo et al.'s construction \cite[Theorem~3]{GirondoEtAl2009}, applied after reducing the curve parameter to its squarefree part and then scaling back, gives nontorsion points on congruent-number curves over explicit cubic fields with exactly one real embedding. The construction at the beginning of Joshi's Section~3.8 \cite{Joshi2018}, with $a=d=3$ and the change of coordinate $x=X-1$, gives $(\theta_3-1,3)$ on $E_3$.

When the rational rank is zero, our results classify points with the prescribed coordinate in the cyclic Shanks family. The trace obstruction and the separate exclusions for $E_5$ and $E_6$ are the main arguments. A final extension allows the shift in the coordinate to vary. The resulting infinite family on $E_3$ explains the role of the fixed shift in Theorem~\ref{thm:main-fixed}.

Section~\ref{sec:marked-classification} proves Theorem~\ref{thm:main-classification}, and Section~\ref{sec:scaled} treats rational scaling. Section~\ref{sec:pell-family} gives the Pell family as $n$ varies. Section~\ref{sec:quadratic-ordinate} proves Theorem~\ref{thm:main-positive-rank}, gives a quartic criterion for the existence of an ordinate at each fixed pair $(n,t)$, and describes the possible nontorsion traces. Section~\ref{sec:shifted} treats integral shifts and constructs the resulting family on $E_3$.

\section{Trace and the classification}\label{sec:marked-classification}

We first describe points with abscissa $\theta_t-1$ and trace zero by a rational line, then exclude the other possible traces when the rational rank is zero. We allow $n$ to be positive rational for the application in Section~\ref{sec:scaled}, where a rational change of coordinates replaces $n$ by $3d^2$.

Set $z=\theta_t-1$ and $f_n(X)=X^3-n^2X$. The minimal polynomial of $z$ is
\begin{equation}\label{eq:q}
      q_t(Z)=g_t(Z+1)=Z^3+(3-t)Z^2-3tZ-(2t+3).
\end{equation}
Thus both $1,z,z^2$ and $1,\theta_t,\theta_t^2$ are $\QQ$-bases of $K_t$. Writing the ordinate in either basis turns the point equation into a polynomial identity.

J\k{e}drzejak's torsion theorem \cite[Lemma~14]{Jedrzejak2012} gives, for a number field $F\subset\mathbb R$ with $[F:\QQ]$ odd,
\begin{equation}\label{eq:torsion}
      E_n(F)_{\tors}=E_n[2]=\{O,(0,0),(n,0),(-n,0)\}.
\end{equation}
Here $E_n[2]$ denotes the subgroup killed by multiplication by two. The theorem's exclusions $\sqrt2,\sqrt3,\sqrt5\notin F$ follow from the tower law, since a field of odd degree cannot contain a quadratic subfield. The fields $K_t$ used here are in fact totally real.

The cited statement assumes that $n$ is squarefree, but it also implies \eqref{eq:torsion} for every positive rational $n$. Write $n=n_0s^2$, where $n_0$ is a positive squarefree integer and $s\in\QQ_{>0}$, and apply the rational isomorphism $E_n\to E_{n_0}$ given by $(x,y)\mapsto(x/s^2,y/s^3)$.

The chord-and-tangent law \cite[III.2, Composition Law~2.1 and Proposition~2.2(a)]{Silverman2009} relates trace zero to collinearity. The distinctness assumption in the next lemma excludes vertical lines and repeated intersections; it holds when the abscissa is $\theta_t-1$, which has degree three.

\begin{lemma}\label{lem:line}
      If $P\in E_n(K_t)$ has three distinct conjugate abscissae, then $\Tr(P)=O$ if and only if its conjugates lie on a nonvertical line defined over $\QQ$.
\end{lemma}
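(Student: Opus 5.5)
We prove both directions from the chord-and-tangent description of the group law \cite[III.2, Composition Law~2.1 and Proposition~2.2(a)]{Silverman2009}: three points $P_1,P_2,P_3$ of $E_n$, counted with multiplicity, satisfy $P_1+P_2+P_3=O$ exactly when they are the intersection of $E_n$ with a line. Write $P=(x_0,y_0)$, $P_1=P$, $P_2=\sigma P$, $P_3=\sigma^2P$. By hypothesis the abscissae $x_0,\sigma x_0,\sigma^2x_0$ are pairwise distinct. In particular $P_1,P_2,P_3$ are three distinct affine points, and no two of them lie on a common vertical line.

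\emph{($\Leftarrow$)} Suppose the conjugates lie on a nonvertical line $L:y=\lambda x+\mu$ with $\lambda,\mu\in\QQ$. Substituting gives the cubic $f_n(X)-(\lambda X+\mu)^2$, which is monic. It vanishes at the three distinct values $x_0,\sigma x_0,\sigma^2 x_0$, so these are exactly its roots. Hence $L\cap E_n=\{P_1,P_2,P_3\}$, each with multiplicity one, and the composition law gives $P_1+P_2+P_3=O$, that is, $\Tr(P)=O$.

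\emph{($\Rightarrow$)} Suppose $\Tr(P)=O$. Since $x(P_1)\ne x(P_2)$, the chord through $P_1$ and $P_2$ is a nonvertical line $L:y=\lambda x+\mu$, where
\[
\lambda=\frac{y_0-\sigma y_0}{x_0-\sigma x_0}\in K_t .
\]
Its third intersection point is $-(P_1+P_2)$. From $P_1+P_2+P_3=O$ we get $-(P_1+P_2)=P_3$, so $P_3\in L$. The third intersection is distinct from $P_1$ and $P_2$ because the abscissae are distinct, so the intersection is transversal. Thus $L$ is the unique line through all three conjugates.

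It remains to show that $L$ is defined over $\QQ$. Applying $\sigma$ to the equation of $L$ gives the line $\sigma L$. It passes through $\sigma P_1=P_2$, $\sigma P_2=P_3$ and $\sigma P_3=P_1$, that is, through the same three points. Two distinct points already determine a line, so $\sigma L=L$. Comparing coefficients in the nonvertical normal form $y=\lambda x+\mu$ gives $\sigma\lambda=\lambda$ and $\sigma\mu=\mu$. Since $\sigma$ generates $\operatorname{Gal}(K_t/\QQ)$, both $\lambda$ and $\mu$ lie in $\QQ$.

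The only delicate point is the multiplicity bookkeeping: we need the line through two conjugates to meet the curve in exactly the three conjugates, and no vertical or tangent line to arise. The distinct-abscissa hypothesis settles this in both directions.
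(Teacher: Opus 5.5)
Your proof is correct and follows essentially the same route as the paper. Both use the chord-and-tangent law to place the three conjugates on a nonvertical line, and both use $\sigma$-invariance of the unique line through two distinct points to show the coefficients lie in $\QQ$; your explicit check that the monic cubic $f_n(X)-(\lambda X+\mu)^2$ has exactly the three conjugate abscissae as simple roots just spells out the multiplicity bookkeeping that the paper's converse leaves implicit.
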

\nopagebreak
\begin{proof}
      If $\Tr(P)=O$, the chord-and-tangent law places $P,\sigma P,\sigma^2P$ on a line. Their abscissae are distinct, so the line is nonvertical and has an equation $y=\ell_1x+\ell_0$ over $K_t$. Applying $\sigma$ gives another line through the same three points. Uniqueness of the line through two distinct points gives $\sigma(\ell_1)=\ell_1$ and $\sigma(\ell_0)=\ell_0$, so $\ell_1,\ell_0\in\QQ$. Conversely, three intersections with a nonvertical line have group sum $O$, which gives $\Tr(P)=O$.
\end{proof}

For the abscissa $\theta_t-1$, the line equation expresses the ordinate as a polynomial of degree at most one in $\theta_t$. Substituting this expression into the curve equation gives a cubic whose coefficients can be compared with those of $g_t$.

\begin{proposition}\label{thm:trace-pell}
      Let $n$ be a positive rational number and  $P=(\theta_t-1,y)\in E_n(K_t)$. Then $\Tr(P)=O$ if and only if $n$ is an integer and there are $u\in\ZZ$ and $\epsilon\in\{1,-1\}$ such that
      \begin{equation}\label{eq:pell-data}
            y=u\theta_t+\epsilon n,\qquad
            t=u^2+3,\qquad (u-\epsilon n)^2-2n^2=-9.
      \end{equation}
\end{proposition}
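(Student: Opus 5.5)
By Lemma~\ref{lem:line}, whose distinctness hypothesis holds because $z=\theta_t-1$ has degree three, the condition $\Tr(P)=O$ is equivalent to $y=\ell_1z+\ell_0$ for some $\ell_1,\ell_0\in\QQ$. I will translate this linear relation into coefficient identities against $q_t$ from \eqref{eq:q}, and then rewrite those identities in the variables $u$ and $\epsilon n$.

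\emph{Forward direction.} Assume $y=\ell_1z+\ell_0$ with $\ell_1,\ell_0\in\QQ$. Substituting into the curve equation shows that $z$ is a root of the monic rational cubic
\[
f_n(Z)-(\ell_1Z+\ell_0)^2=Z^3-\ell_1^2Z^2-(n^2+2\ell_1\ell_0)Z-\ell_0^2 .
\]
Since $q_t$ is the minimal polynomial of $z$ and has degree three, this cubic must equal $q_t$. Comparing coefficients gives
\[
\ell_1^2=t-3,\qquad n^2+2\ell_1\ell_0=3t,\qquad \ell_0^2=2t+3 .
\]
Put $u=\ell_1$. Then $u^2=t-3\in\ZZ$ with $u\in\QQ$, so $u\in\ZZ$ and $t=u^2+3$. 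Likewise $\ell_0^2=2t+3\in\ZZ$ forces $\ell_0\in\ZZ$. Using the last two identities,
\[
(\ell_0-u)^2=\ell_0^2-2u\ell_0+u^2=(2t+3)-(3t-n^2)+(t-3)=n^2 .
\]
Hence $\ell_0-u=\epsilon n$ for some $\epsilon\in\{1,-1\}$. It follows that $n=|\ell_0-u|$ is an integer. The ordinate becomes $y=u(\theta_t-1)+u+\epsilon n=u\theta_t+\epsilon n$. Finally, substitute $\ell_0=u+\epsilon n$ into $\ell_0^2=2u^2+9$. Expanding and rearranging gives $u^2-2\epsilon un-n^2=-9$, which is exactly $(u-\epsilon n)^2-2n^2=-9$.

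\emph{Converse.} Given data satisfying \eqref{eq:pell-data}, set $\ell_1=u$ and $\ell_0=u+\epsilon n$, both rational. The Pell relation gives $\ell_0^2=2u^2+9=2t+3$, and $\ell_1^2=t-3$ holds by definition of $t$. From $\ell_0^2=2u^2+9$ we also get $n^2+2\epsilon un=u^2+9$. Therefore
\[
n^2+2\ell_1\ell_0=n^2+2u^2+2\epsilon un=3u^2+9=3t .
\]
Thus the three coefficient identities hold, and $y=u\theta_t+\epsilon n=\ell_1z+\ell_0$ lies on a nonvertical rational line. Applying $\sigma$ and $\sigma^2$ to the relation $y=\ell_1x+\ell_0$ shows that all three conjugates of $P$ lie on this line. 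Lemma~\ref{lem:line} then gives $\Tr(P)=O$.

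Nothing here is a real obstacle; the steps are coefficient comparison and elementary algebra. The only points needing care are the integrality deductions and the identification $\ell_0-u=\pm n$. For integrality, a rational number whose square is an integer is itself an integer; this is applied to both $\ell_1$ and $\ell_0$, and it is what makes $n$ an integer. The identification $\ell_0-u=\pm n$ is the substitution that converts the three coefficient conditions into the single Pell-type equation.
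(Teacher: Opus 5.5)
Your proof is correct and takes essentially the paper's route: Lemma~\ref{lem:line}, then equating the resulting monic cubic with the minimal polynomial and comparing coefficients. The only difference is the basis. You expand in $1,z,z^2$ against $q_t$, so the constant term gives $\ell_0^2=2t+3$ and $n=|\ell_0-u|$ is integral at once. The paper works in $\theta_t$ against $g_t$, gets $w^2=n^2$ directly, and instead needs the rational root theorem to show $n\in\ZZ$.
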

\begin{proof}
      If $\Tr(P)=O$, Lemma~\ref{lem:line} gives $y=u\theta_t+w$ with $u,w\in\QQ$. Consider
      \[
            H(X)=(X-1)^3-n^2(X-1)-(uX+w)^2.
      \]
      Since $H(\theta_t)=0$, the minimal polynomial $g_t$ divides $H$. Both are monic cubics, so they are equal. Expanding $H$ gives
      \[
            H(X)=X^3-(3+u^2)X^2+(3-n^2-2uw)X+n^2-1-w^2.
      \]
      Comparison with $g_t$ now yields
      \[
            t=u^2+3,\qquad n^2+2uw=u^2+9,\qquad w^2=n^2.
      \]
      The last equation gives $w=\epsilon n$, and the middle equation becomes $(u-\epsilon n)^2-2n^2=-9$.

      It remains to check integrality. A rational number whose square is an integer is itself an integer, so $u^2=t-3$ gives $u\in\ZZ$. Substituting $w=\epsilon n$ into the middle equation makes $n$ a rational root of the monic integral polynomial $X^2+2\epsilon uX-u^2-9$. The rational root theorem then gives $n\in\ZZ$.

      Conversely, the conditions in \eqref{eq:pell-data} give
      \begin{align}
             & (X-1)^3-n^2(X-1)-(uX+\epsilon n)^2-g_{u^2+3}(X)\notag \\
             & \hspace{20mm}=\bigl((u-\epsilon n)^2-2n^2+9\bigr)X=0.
            \label{eq:identity}
      \end{align}
      The resulting point and its conjugates lie on $y=u(x+1)+\epsilon n$, so their trace is zero.
\end{proof}

This proposition describes exactly the points whose trace is zero, without any rank assumption. To obtain a classification when the rational rank is zero, we must also rule out a trace equal to one of the three nonzero points in $E_n[2]$. Translation by that point will reduce the question to the rational-line criterion.

\begin{theorem}\label{thm:obstruction}
      Suppose $n$ is a positive rational number and $t\geq-1$ is an integer. Then for any point $P=(\theta_t-1,y)\in E_n(K_t)$,
      \[
            \Tr(P)\notin E_n[2]\setminus\{O\}.
      \]
\end{theorem}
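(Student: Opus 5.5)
The plan is to reduce the nonzero two-torsion case to the trace-zero case by translation, and then use Lemma~\ref{lem:line} on the translated point. Suppose $\Tr(P)=T$ with $T\in E_n[2]\setminus\{O\}$; then $T$ is rational and $2T=O$. Put $Q=P+T\in E_n(K_t)$. Since $T$ is fixed by $\sigma$,
\[
\Tr(Q)=\Tr(P)+3T=T+3T=4T=O.
\]
The abscissa $x(Q)$ is a rational Möbius transform of $z=\theta_t-1$. For $T=(0,0)$ it is $x(Q)=-n^2/z$. For $T=(\pm n,0)$ it is $x(Q)=\pm n(z\pm n)/(z\mp n)$, to be double-checked with the addition formula. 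In each case $x(Q)$ generates $K_t$, so its three conjugates are distinct. Here $z\ne 0,\pm n$ because $z$ has degree three, so $Q\ne O$. Lemma~\ref{lem:line} therefore gives $a,b\in\QQ$ with $y(Q)=a\,x(Q)+b$. Consequently the minimal polynomial of $w=x(Q)$ is
\[
X^3-a^2X^2-(n^2+2ab)X-b^2 .
\]

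I will then compute the minimal polynomial of $w$ directly from $q_t$ in \eqref{eq:q} and compare coefficients.

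\emph{Case $T=(0,0)$.} Substituting $z=-n^2/w$ into $q_t$ and normalizing gives
\[
w^3-\frac{3tn^2}{2t+3}w^2-\frac{(3-t)n^4}{2t+3}w+\frac{n^6}{2t+3}.
\]
Since $t\ge-1$ gives $2t+3>0$, the constant term is positive. It must equal $-b^2\le0$, which is a contradiction, so this case is excluded purely by sign.

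\emph{Case $T=(\pm n,0)$.} Here the constant term is $-\Norm_{K_t/\QQ}(w)$. Using $\Norm_{K_t/\QQ}(z-c)=-q_t(c)$, the constant-term comparison reads
\[
b^2=\pm n^3\,\frac{q_t(\mp n)}{q_t(\pm n)} .
\]
Comparing the $X^2$ coefficient gives $a^2$ as the trace of $w$, which is an explicit rational function of $n$ and $t$. The $X$ coefficient determines $ab$. Thus $a^2$, $b^2$ and $(ab)^2$ are all explicit rational functions of $(n,t)$, and the consistency condition
\[
(ab)^2=a^2\,b^2
\]
becomes a polynomial identity in $n$ and $t$.

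As an alternative route to the same equations, I could expand $f_n(w)=(aw+b)^2$ after clearing the denominator $(z\mp n)^3$. Rewriting both sides in the basis $1,z,z^2$ via $q_t(z)=0$ gives three rational equations in $a,b,n,t$.

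I expect the main obstacle to be this last step: showing that the resulting system has no solutions with $n\in\QQ_{>0}$ and integer $t\ge-1$. I would try the following, in order.
\begin{itemize}[leftmargin=2em]
\item First, a sign argument as in the case $T=(0,0)$. For instance, $a^2\ge0$ forces the trace of $w$ to be nonnegative, and $b^2\ge0$ constrains the sign of $q_t(n)q_t(-n)$; together with the middle coefficient this might already be inconsistent.
\item Failing that, I would factor the identity $(ab)^2=a^2b^2$. My expectation is that it factors with a factor like $(z\mp n)$-type expressions, or that it is forced into an impossible square-class condition. Such a condition would be detected modulo a small prime, after clearing the denominator of $n$ and tracking valuations at primes dividing that denominator.
\end{itemize}
The two signs $\pm n$ should be treated in parallel. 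A substitution $n\mapsto-n$ in the formulas relates them, so a single computation should cover both.
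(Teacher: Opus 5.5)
Your reduction is sound and parallels the paper's. Translating by $T$, applying Lemma~\ref{lem:line} to $Q=P+T$, and reading off the minimal polynomial $X^3-a^2X^2-(n^2+2ab)X-b^2$ of $w=x(Q)$ is equivalent to the paper's step: it rewrites $y$ as a quadratic $j_2(z-r_2)(z+\beta)$ and compares $J^2-f_n$ with $j_2^2(Z-r_2)q_t(Z)$. Your case $T=(0,0)$ is complete and is the paper's sign argument.

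The gap is that the cases $T=(\pm n,0)$, which carry the real content of the theorem, are left as a list of things to try. The first of these cannot succeed for $T=(n,0)$. Your system has real solutions compatible with every positivity condition, for example $n=10$, $t\approx10.17$. There $q_t(\pm n)<0$, $a^2>0$, $b^2>0$, and $2+1/\theta_t<n<\theta_t-1$. So the obstruction has to be arithmetic, and the proposal does not supply it.

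Your own route does close, and it reaches the paper's obstruction. Substitute
\[
a^2=-\frac{n\bigl(3n^3+(3-t)n^2+3tn+6t+9\bigr)}{q_t(n)},\qquad ab=-\frac{2n^2(n^3-2t-3)}{q_t(n)},\qquad b^2=\frac{n^3q_t(-n)}{q_t(n)}
\]
into $(ab)^2=a^2b^2$ and cancel $n^4/q_t(n)^2$. This gives
\[
4(n^3-2t-3)^2=-\bigl(3n^3+(3-t)n^2+3tn+6t+9\bigr)\,q_t(-n).
\]
This is a quadratic in $t$ with leading coefficient $(n^2-3n-2)^2\neq0$. Its discriminant is $64n^4D(n)$, where $D(n)=2n^4+3n^3-24n^2+9n+18$.

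Since $t$ is rational and $n\neq0$, $D(n)$ must be a rational square. Write $n=h/k$ in lowest terms. If $3\nmid h$, then $k^4D(h/k)\equiv2\pmod 3$. If $3\mid h$, then $k^4D(h/k)/9\equiv2\pmod 3$. Either way $D(n)$ is not a square. This is exactly the paper's argument, where $4D(n)$ arises as the discriminant of a quadratic in $\beta$.

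This argument never uses $n>0$, so your substitution $n\mapsto-n$ then also disposes of $T=(-n,0)$. Note that in your variables the constant-term sign for $T=(-n,0)$ is not contradictory by itself. It becomes so only after adding the real-embedding inequalities, which force $q_t(n),q_t(-n)<0$. The paper instead gets $n\beta^2=-(2t+3)$ directly.
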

\begin{proof}
      Suppose $\Tr(P)=T_2=(r_2,0)\ne O$, where $r_2=0,n$, or $-n$, and put $Q=P+T_2$. The point $T_2$ is rational and has order two, so
      \[
            \Tr(Q)=\Tr(P)+3T_2=T_2+T_2=O.
      \]
      Since $z$ has degree three, $z\ne r_2$ and $Q$ is affine. Put $\mu=f_n'(r_2)=3r_2^2-n^2\ne0$. Substituting $y^2=f_n(z)$ into the addition formulas with slope $y/(z-r_2)$ gives
      \begin{equation}\label{eq:translate}
            x(Q)=r_2+\frac{\mu}{z-r_2},\qquad
            y(Q)=-\frac{\mu y}{(z-r_2)^2}.
      \end{equation}
      The fractional-linear expression for $x(Q)$ preserves distinctness of the conjugate abscissae: its denominators are nonzero and $\mu\ne0$. Lemma~\ref{lem:line} therefore gives $y(Q)=\ell_1 x(Q)+\ell_0$ with $\ell_1,\ell_0\in\QQ$. Substitution into \eqref{eq:translate} yields
      \[
            y=-\frac{\ell_1r_2+\ell_0}{\mu}(z-r_2)^2-\ell_1(z-r_2).
      \]
      If $\ell_1r_2+\ell_0=0$, the ordinate has degree at most one in $z$, which would force $\Tr(P)=O$. Hence we can write $y=J(z)$, where $J(Z)=j_2(Z-r_2)(Z+\beta)$ with $ j_2\in\QQ^\times$ and $\beta\in\QQ$.

      Since $J(z)^2=f_n(z)$, the minimal polynomial $q_t(Z)$ divides $J(Z)^2-f_n(Z)$. Also $J(r_2)=f_n(r_2)=0$, so $Z-r_2$ divides it. These factors are coprime because $q_t(Z)$ has no rational root. Comparison of degrees and leading coefficients therefore gives
      \[
            J(Z)^2-f_n(Z)=j_2^2(Z-r_2)q_t(Z).
      \]
      On putting $\lambda=j_2^{-2}>0$ and cancelling $j_2^2(Z-r_2)$, we obtain
      \[
            q_t(Z)=(Z-r_2)(Z+\beta)^2-\lambda(Z^2+r_2Z+r_2^2-n^2).
      \]
      Its coefficients give
      \begin{align}
            \lambda                        & =t-3+2\beta-r_2,\label{eq:lambda}  \\
            \beta^2-2r_2\beta-\lambda r_2  & =-3t,\label{eq:linear-coeff}       \\
            -r_2\beta^2-\lambda(r_2^2-n^2) & =-(2t+3).\label{eq:constant-coeff}
      \end{align}
      For $r_2=0$, the last equation is $\lambda n^2=-(2t+3)$; for $r_2=-n$, it is $n\beta^2=-(2t+3)$. Both are impossible because $t\geq-1$.

      For $r_2=n$, equation~\eqref{eq:constant-coeff} gives $t=(n\beta^2-3)/2$. Eliminating $\lambda$ from \eqref{eq:linear-coeff} first gives
      \[
            \beta^2-4n\beta+(3-n)t+3n+n^2=0.
      \]
      Substituting the value of $t$ and multiplying by two yields
      \begin{equation}\label{eq:B-quadratic}
            (2+3n-n^2)\beta^2-8n\beta+(2n^2+9n-9)=0.
      \end{equation}
      Its leading coefficient cannot vanish for rational $n$, since that would imply $(2n-3)^2=17$. Thus its discriminant must be a rational square. The discriminant is $4D(n)$, where
      \[
            D(n)=2n^4+3n^3-24n^2+9n+18.
      \]
      We show that $D(n)$ cannot be a rational square by reducing modulo $3$. Write $n=h/k$ with coprime positive integers $h,k$. Clearing the denominator gives
      \[
            k^4D(h/k)=2h^4+3h^3k-24h^2k^2+9hk^3+18k^4.
      \]
      If $3\nmid h$, this integer is $2$ modulo $3$. If $h=3\ell $, then $3\nmid k$, and its quotient by $9$ is
      \[
            18\ell ^4+9\ell ^3k-24\ell ^2k^2+3\ell k^3+2k^4\equiv2\pmod3.
      \]
      Multiplication by $k^4$ and division by $9$ preserve the property of being a rational square. In either case we obtain an integer congruent to $2$ modulo $3$. Since a rational square that is integral is an integer square, both cases are impossible. This contradiction excludes $r_2=n$.
\end{proof}

The Mordell--Weil theorem \cite[VIII.6, Theorem~6.7]{Silverman2009} supplies finite generation over number fields, which we use in the following rank calculation.

\begin{proposition}\label{prop:rank}
      Suppose $n$ is a positive rational number and $P=(\theta_t-1,y)\in E_n(K_t)$. Then $P$ has infinite order and
      \[
            \rank E_n(K_t)\geq\rank E_n(\QQ)+2.
      \]
      Its conjugates generate a subgroup of rank two if $\Tr(P)=O$, and of rank three otherwise.
\end{proposition}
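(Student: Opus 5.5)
We first show that $P$ has infinite order. By \eqref{eq:torsion}, every torsion point of $E_n(K_t)$ lies in $E_n[2]$, so its abscissa is $\infty$, $0$, $n$ or $-n$ and hence rational. Since $x(P)=\theta_t-1$ has degree three, $P$ is not torsion. The same holds for $\sigma P$ and $\sigma^2P$.

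To control the rank of the subgroup generated by the conjugates, we use the action of $\ZZ[\sigma]$ on $V=E_n(K_t)\otimes\QQ$. This space splits as the $\sigma$-fixed part $V^{\sigma}=E_n(\QQ)\otimes\QQ$ and the part killed by $1+\sigma+\sigma^2$. The fixed-part identification holds because $\Tr$ composed with inclusion is multiplication by $3$ on $E_n(\QQ)$, and $E_n(K_t)^\sigma=E_n(\QQ)$. On the second part, $\sigma$ acts through $\QQ(\zeta_3)$, so that part is a $\QQ(\zeta_3)$-vector space and has even $\QQ$-dimension.

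Let $W$ be the span of $P,\sigma P,\sigma^2P$ in $V$. Decompose $P=\tfrac13\Tr(P)+P_0$, where $P_0=P-\tfrac13\Tr(P)$ lies in the trace-zero part. We claim $P_0\neq0$. If it vanished, then $3P=\Tr(P)$ up to torsion, so $3P$ would be fixed by $\sigma$. Then $3(\sigma P-P)$ would be zero, making $\sigma P-P$ torsion and hence in $E_n[2]$. Thus $\sigma P=P+T$ with $T\in E_n[2]$, and $x(\sigma P)$ would be obtained from $x(P)$ by translation by a rational $2$-torsion point. For $T=O$ this gives $\sigma(z)=z$, which is impossible. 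For $T\neq O$ it gives $\sigma(z)=r_2+\mu/(z-r_2)$ as in \eqref{eq:translate}, a rational Möbius relation between $z$ and $\sigma z$. We would exclude this by comparing with the known relation $\sigma(\theta_t)=-1/(\theta_t+1)$, i.e.\ $\sigma z=-1/(z+2)-1$. Both are Möbius maps with rational coefficients sending $z$ to $\sigma z$. Since $z$ has degree three, two such maps agreeing at $z$ must coincide as maps. The map $r_2+\mu/(Z-r_2)$ is an involution, while $\sigma$ has order three, which is a contradiction.

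Hence $P_0\neq0$, and the $\QQ(\zeta_3)$-span of $P_0$, namely the span of $P_0$ and $\sigma P_0$, has dimension exactly two. Then $W$ is the span of $\Tr(P)$ together with this two-dimensional piece. It has dimension $2$ if $\Tr(P)$ is torsion and $3$ otherwise. By Theorem~\ref{thm:obstruction} together with \eqref{eq:torsion}, $\Tr(P)$ is torsion exactly when $\Tr(P)=O$, which gives the stated dichotomy.

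The rank inequality follows because the trace-zero part has dimension at least two and meets $V^\sigma$ trivially, so $\rank E_n(K_t)\geq\rank E_n(\QQ)+2$. Finite generation from Mordell--Weil makes these dimensions the ranks.

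The main obstacle is the step showing $P_0\neq0$. The Möbius-comparison argument needs care, and I would verify it directly: substituting $\sigma z=-(z+3)/(z+2)$ into $\sigma z=(r_2z-r_2^2+\mu)/(z-r_2)$ gives a polynomial of degree at most two in $z$ with rational coefficients. That polynomial must vanish identically because $z$ has degree three, and we then check that no $r_2\in\{0,\pm n\}$ satisfies the resulting identities.
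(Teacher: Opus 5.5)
Your proposal is correct and is essentially the paper's argument. The paper's $Q=3P-\Tr(P)$ is your $3P_0$, its determinant $a^2-ab+b^2>0$ is your observation that the trace-zero part is a $\QQ(\zeta_3)$-vector space, and both get the dichotomy from Theorem~\ref{thm:obstruction}. The step you flagged as the main obstacle, $P_0\neq0$, is actually immediate: once $\sigma P-P$ is killed by $3$ and lies in $E_n[2]$, it is already $O$ because there is no nonzero $3$-torsion, which is how the paper finishes. So your M\"obius comparison, while valid, is unnecessary.
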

\begin{proof}
      The abscissa $\theta_t-1$ has degree three, so it is not $0$ or $\pm n$. Equation~\eqref{eq:torsion} therefore excludes torsion. Put $T=\Tr(P)$ and $Q=3P-T$. Since $T$ is rational, $\Tr(Q)=3T-3T=O$. The kernel of the trace contains no nonzero torsion: all torsion points are rational and killed by two, so the trace acts on them as multiplication by three, which is the identity. If $Q=O$, applying $\sigma$ and subtracting would give $3(\sigma P-P)=O$. There is no nonzero three-torsion, so $\sigma P=P$, contrary to the degree of its abscissa. Thus $Q$ is nontorsion.

      Let $V=E_n(K_t)\otimes_{\ZZ}\QQ$. The trace extends $\QQ$-linearly to $V$. For any nonzero $v\in V$ with $\Tr(v)=0$, we have $\sigma^2v=-v-\sigma v$. A relation $av+b\sigma v=0$ with $a,b\in\QQ$, together with its conjugate, gives
      \[
            av+b\sigma v=0,\qquad -bv+(a-b)\sigma v=0.
      \]
      If $(a,b)\ne(0,0)$, the determinant $a^2-ab+b^2=(a-b/2)^2+3b^2/4$ is positive. The equations would then force $v=0$, a contradiction. Thus $v,\sigma v$ are independent, and the third conjugate lies in their span.

      Apply this to $v=Q\otimes1$. The trace vanishes on the resulting two-dimensional subspace and acts as multiplication by three on $E_n(\QQ)\otimes\QQ$. These subspaces therefore have zero intersection. Adding their dimensions proves the rank inequality.

      If $T=O$, the same argument applied to $P\otimes1$ gives rank two for the subgroup generated by the conjugates of $P$. If $T\ne O$, Theorem~\ref{thm:obstruction} and \eqref{eq:torsion} make $T$ nontorsion. The span of the conjugates of $P$ contains $T\otimes1$, $Q\otimes1$, and $\sigma Q\otimes1$. The last two are independent and have trace zero, whereas $\Tr(T\otimes1)=3T\otimes1\ne0$. The three vectors are independent, so the subgroup generated by the three conjugates of $P$ has rank three.
\end{proof}

\begin{proof}[Proof of Theorem~\ref{thm:main-classification}]
      The rank-zero hypothesis and \eqref{eq:torsion} imply $E_n(\QQ)=E_n[2]$. Since the trace is rational, Theorem~\ref{thm:obstruction} forces every point with abscissa $\theta_t-1$ to have trace zero. We may therefore apply Proposition~\ref{thm:trace-pell}.

      Negating a point changes $(u,\epsilon)$ to $(-u,-\epsilon)$, so we may take $\epsilon=1$ up to the sign of the ordinate. The equation becomes $(u-n)^2=2n^2-9$, giving $u=n\pm v$ and $t=(n\pm v)^2+3$. The case $v=0$ would require $2n^2=9$ and is impossible for integral $n$. Thus $v>0$, and the two defining parameters differ by $4nv>0$.

      Conversely, these values satisfy \eqref{eq:identity}, so each listed point lies on the curve and has trace zero. At this abscissa, the right-hand side of the curve equation is nonzero, so there are exactly two ordinates, differing by sign. This proves completeness of the list, and Proposition~\ref{prop:rank} gives infinite order and independence of two conjugates.
\end{proof}

In particular, when the rational rank is zero and $3\nmid n$, no point with abscissa $\theta_t-1$ exists, since $v^2=2n^2-9$ would give $v^2\equiv2\pmod3$.

\section{Rationally scaled coordinates}\label{sec:scaled}

Theorem~\ref{thm:main-classification} fixes the form of the coordinate and allows the curve to vary. A rational scaling lets us apply it to the fixed curve $E_3$ while varying the denominator of the coordinate. We first recall the rational-rank calculation needed for this application.

In this descent calculation, let $B\in\ZZ\setminus\{0\}$ and consider the curve $y^2=x^3+Bx$. The square-class map used in $2$-isogeny descent \cite[X.4, Proposition~4.9; X.6]{Silverman2009} sends a point with $x\ne0$ to the class of $x$ in $\QQ^\times/\QQ^{\times2}$. It sends $O$ to $1$ and $(0,0)$ to the class of $B$.

To identify the possible classes, take an affine rational point with $x\ne0$ and write $x=r/s^2$, $y=q/s^3$, with $r,s,q\in\ZZ$, $s>0$, and $\gcd(r,s)=1$. These denominator powers follow by comparing valuations in the integral Weierstrass equation. We obtain $q^2=r(r^2+Bs^4)$. If a rational prime divides $r$ but not $B$, the second factor is a unit at that prime, so its exponent in $r$ must be even. Thus the square class is represented by a signed squarefree divisor of $B$.

Apply this to $E_3$ and its $2$-isogenous curve $E_3':y^2=x^3+36x$, and denote their descent maps to $\QQ^\times/\QQ^{\times2}$ by $\alpha$ and $\alpha'$, respectively. The two images are contained, respectively, in
\[
      \{1,-1,3,-3\}\quad\text{and}\quad\{1,2,3,6\},
\]
where the second list has only positive classes because every nonzero abscissa on $E_3'$ is positive. On $E_3$, all four classes occur at $O,(0,0),(3,0),(-3,0)$, so $\#\operatorname{im}\alpha=4$.

A class $\delta\in\{2,3,6\}$ in the second image would give $x=\delta(U/V)^2$ with coprime integers $U,V$ and $V>0$. Substitution into the curve equation gives
\[
      W^2=\delta U^4+(36/\delta)V^4
\]
for a rational $W$, which must be integral because its square is integral. For $\delta=3$ or $6$, first $3\mid W$. Writing $W=3W_1$ and dividing by $3$ gives, respectively,
\[
      3W_1^2=U^4+4V^4,\qquad
      3W_1^2=2U^4+2V^4.
\]
Modulo $3$, either equation forces $3\mid U,V$, contrary to coprimality. For $\delta=2$, reduction modulo $3$ first gives $3\mid U,W$; writing $U=3U_1,W=3W_1$ gives $W_1^2=18U_1^4+2V^4$, which forces $3\mid V$. Thus $\#\operatorname{im}\alpha'=1$.

Let $\phi:E_3\to E_3'$ be the two-isogeny and $\hat\phi$ its dual. Since $\phi(3,0)=(0,0)$, we have $\phi(E_3(\QQ)[2])=E_3'(\QQ)[\hat\phi]$. The kernel term $E_3'(\QQ)[\hat\phi]/\phi(E_3(\QQ)[2])$ in the descent exact sequence is therefore trivial. The descent formula \cite[X.4, Remark~4.7; X.6]{Silverman2009} gives
\[
      2^{\rank E_3(\QQ)}=\frac{\#\operatorname{im}\alpha\,\#\operatorname{im}\alpha'}{\#E_3(\QQ)[2]}=\frac{4\cdot1}{4}=1,
\]
so $E_3(\QQ)$ has rank zero.

\begin{proof}[Proof of Theorem~\ref{thm:main-fixed}]
      The rational isomorphism
      \[
            E_{3d^2}\longrightarrow E_3,\qquad
            (x,y)\longmapsto(x/d^2,y/d^3)
      \]
      has inverse $(X,Y)\mapsto(d^2X,d^3Y)$, which sends the prescribed point on $E_3$ to a point on $E_{3d^2}$ with abscissa $\theta_t-1$. Since the isomorphism is rational, $E_{3d^2}(\QQ)$ has rank zero and the trace of such a point is two-torsion.

      Theorem~\ref{thm:obstruction} forces the trace to be zero, and Proposition~\ref{thm:trace-pell} then gives $3d^2\in\ZZ$. Write $d=a/b$ in lowest positive terms. Integrality gives $b^2\mid3a^2$, and coprimality gives $b^2\mid3$. Thus $b=1$, so every admissible rational scale is integral.

      Theorem~\ref{thm:main-classification} now gives $v^2=9(2d^4-1)$. Since $3\mid v$, write $v=3B$ with $B\in\ZZ_{>0}$; then
      \[
            B^2+1=2d^4.
      \]
      Ljunggren's theorem \cite{Ljunggren1942}, in the form recorded by Steiner and Tzanakis \cite[Theorem~1]{SteinerTzanakis1991}, gives exactly $(B,d)=(1,1)$ and $(239,13)$.

      For $d=1$, the slopes are $u=3\pm3=0,6$, so $t=u^2+3$ gives $t=3,39$. For $d=13$, we have $n=507$ and $v=717$, giving $u=507\pm717=-210,1224$ and hence $t=44103,1498179$. The ordinate on $E_{3d^2}$ is $u\theta_t+3d^2$; division by $d^3$ gives the ordinates on $E_3$ listed in Table~\ref{tab:four}.

      Conversely, these values satisfy \eqref{eq:identity}, so the table gives points on the curve. Their conjugates generate a rank-two subgroup by Proposition~\ref{prop:rank}, and the rational scaling preserves both infinite order and the rank bound.
\end{proof}

\begin{table}[ht]
      \centering
      \begin{tabular}{rrll}
            \toprule
            \(d\) & \(t\)   & \(X\)                & one choice of \(Y\)         \\
            \midrule
            1     & 3       & \(\theta_t-1\)       & \(3\)                       \\
            1     & 39      & \(\theta_t-1\)       & \(6\theta_t+3\)             \\
            13    & 44103   & \((\theta_t-1)/169\) & \((-210\theta_t+507)/2197\) \\
            13    & 1498179 & \((\theta_t-1)/169\) & \((1224\theta_t+507)/2197\) \\
            \bottomrule
      \end{tabular}
      \caption{Points on \(E_3\) with abscissa \((\theta_t-1)/d^2\). Each row also has the point with ordinate \(-Y\).}
      \label{tab:four}
\end{table}

The four defining parameters are distinct. To distinguish their fields as well, we compare ramified primes.

\begin{proposition}\label{prop:four-fields}
      The four fields in Table~\ref{tab:four} are pairwise nonisomorphic.
\end{proposition}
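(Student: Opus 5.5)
The plan is to separate the four fields by the rational primes that ramify in them. There are two tools.

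\emph{Upper bound on ramification.} Since $\operatorname{disc}(g_t)=\Delta_t^2$ is the discriminant of the order $\ZZ[\theta_t]$, the field discriminant divides $\Delta_t^2$. Hence every prime not dividing $\Delta_t$ is unramified in $K_t$.

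\emph{A positive ramification criterion.} Substituting $X=Y+t/3$ into $g_t$ gives a depressed cubic. After clearing the denominators (multiply by $27$ and set $W=3X-t$), it takes the form
\[
W^3-3\Delta_t W-(2t+3)\Delta_t .
\]
I expect this from the standard depressed form of $g_t$; this coefficient shape is the one step I will check by hand first. Now let $p\ne3$ be a prime with $v_p(\Delta_t)=1$. The identity $4\Delta_t=(2t+3)^2+27$ shows that $p\mid 2t+3$ would force $p\mid 27$, which is impossible. So the constant term $-(2t+3)\Delta_t$ has $p$-valuation exactly one, and the linear coefficient is divisible by $p$. The polynomial in $W$ is therefore Eisenstein at $p$. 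Since $W=3\theta_t-t$ generates $K_t$, the prime $p$ is totally ramified in $K_t$.

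Next I compute $\Delta_t$ for the four parameters.
\begin{itemize}
\item For $t=3$: $\Delta_3=27$. So $3$ is the only possible ramified prime in $K_3$, and it is ramified because $K_3\ne\QQ$ has nontrivial discriminant.
\item For $t=39$: $\Delta_{39}=1647=27\cdot61$. So $61$ ramifies in $K_{39}$ but not in $K_3$. This already separates $K_3$ from $K_{39}$.
\item For $t=44103=3\cdot14701$: $\Delta_t=9(s^2+s+1)$ with $s=14701$. This gives $\Delta_{44103}=27\cdot 72044701$.
\item For $t=1498179$: $\Delta_{1498179}$ is obtained similarly by writing $t=3s'$ with $s'=499393$, so that $\Delta=9(s'^2+s'+1)$.
\end{itemize}
I will then factor the two large cofactors, and aim to exhibit for each of the two large parameters a prime $p\ne3$ with $v_p(\Delta_t)=1$ that does not divide the other three values of $\Delta$. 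Every prime factor of $s^2+s+1$ other than $3$ is congruent to $1\bmod 3$, which narrows the trial division. By the Eisenstein criterion such a prime ramifies in that field. By the upper bound it is unramified in the other three fields. Isomorphic fields have the same ramified primes, so each remaining pair is separated by such a prime.

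The main obstacle is this arithmetic step. It requires actually factoring $72044701$ and the corresponding cofactor for $t=1498179$, and finding in each a prime to the first power that is absent from the others. If some prime occurs only to a higher power (for instance to exponent divisible by $3$), the Eisenstein argument fails at that prime. I would then either choose a different prime factor, or fall back on computing $v_p$ of the field discriminant through a $p$-maximality check of a suitably rescaled order.
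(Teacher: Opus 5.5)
\textbf{Gap.} Your framework matches the paper's: primes not dividing $\Delta_t$ are unramified because $\operatorname{disc}(g_t)=\Delta_t^2$, and you use $\eta=3\theta_t-t$ with $\eta^3=3\Delta_t\eta+(2t+3)\Delta_t$. Your depressed cubic is correct, as is your argument that $2t+3$ is a unit at any $p\mid\Delta_t$ with $p\ne3$. The gap is that the proof stops exactly where the work lies. For $t=44103$ and $t=1498179$ you never exhibit a prime that ramifies in that field and in none of the other three. Your positive criterion is Eisenstein, so it only covers $v_p(\Delta_t)=1$. It therefore cannot guarantee such a prime exists without fully factoring $72044701$ and the cofactor of $\Delta_{1498179}$, which means trial division up to about $3.7\cdot10^4$. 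If every prime factor of a cofactor occurred to exponent at least two, your argument would stall, and your fallback is not specified. Note also that $\Delta_{1498179}=27\cdot61\cdot1362808021$. So $61$ ramifies in both $K_{39}$ and $K_{1498179}$ and cannot separate them; the large cofactor has to do it.

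\textbf{How to close it without factoring.} The missing idea is that no factorization is needed.
\begin{enumerate}
\item Since $p\mid\Delta_t$ gives $\eta\equiv0\pmod{\mathfrak p}$, the factor $3\eta+2t+3$ is a $\mathfrak p$-unit. Taking valuations in $\eta^3=\Delta_t(3\eta+2t+3)$ gives $3v_{\mathfrak p}(\eta)=e_{\mathfrak p}v_p(\Delta_t)$.
\item Hence $p\ne3$ is totally ramified whenever $3\nmid v_p(\Delta_t)$, not only when $v_p(\Delta_t)=1$. Equivalently, the Newton polygon of your cubic is a single segment, so every root has valuation $v_p(\Delta_t)/3$.
\item Check that $72044701$ and $1362808021$ are coprime to each other, by the Euclidean algorithm.
\item Check that both are coprime to $3\cdot61$: their residues modulo $61$ are $41$ and $6$, and both are $\equiv1\pmod 3$.
\item Check that neither is a cube: $416^3<72044701<417^3$ and $1108^3<1362808021<1109^3$.
\item A non-cube has a prime factor whose exponent is not divisible by three. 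By step 2 that prime ramifies in its own field. By coprimality and your upper bound it ramifies in none of the others.
\end{enumerate}
Together with $61$ separating $K_3$ from $K_{39}$, this finishes the proof using only a gcd, two residues and two cube bounds.
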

\nopagebreak
\begin{proof}
      Recall that $\Delta_t=t^2+3t+9$. We first show that a prime $p\ne3$ is totally ramified in $K_t$ whenever $v_p(\Delta_t)$ is not divisible by three. Put $\eta=3\theta_t-t$. The defining equation of $\theta_t$ gives
      \[
            \eta^3=\Delta_t(3\eta+2t+3),\qquad
            (2t+3)^2=4\Delta_t-27.
      \]
      Let $\mathfrak p$ lie above $p$, with ramification index $e_{\mathfrak p}$, and normalize $v_{\mathfrak p}$ to take integer values. Since $p\mid\Delta_t$, the first identity gives $\eta\equiv0\pmod{\mathfrak p}$. The second identity and $p\ne3$ show that $2t+3$ is a unit at $\mathfrak p$. Thus $3\eta+2t+3$ is also a unit, and taking valuations gives
      \[
            3v_{\mathfrak p}(\eta)=e_{\mathfrak p}v_p(\Delta_t).
      \]
      If $3\nmid v_p(\Delta_t)$, then $3\mid e_{\mathfrak p}$. Since the field has degree three, $e_{\mathfrak p}=3$, proving total ramification. A prime not dividing $\Delta_t$ is unramified, since the field discriminant divides $\operatorname{disc}(g_t)=\Delta_t^2$.

      Here
      \[
            \begin{aligned}
                  \Delta_3         & =27,                       & \Delta_{39} & =27\cdot61, \\
                  \Delta_{44103}   & =27\cdot72044701,          &
                  \Delta_{1498179} & =27\cdot61\cdot1362808021.
            \end{aligned}
      \]
      The Euclidean algorithm gives $\gcd(72044701,1362808021)=1$. Their residues modulo $61$ are $41$ and $6$, respectively, and both are $1$ modulo $3$. Thus neither large factor shares a prime divisor with the other or with $3\cdot61$. Neither is a cube:
      \[
            416^3<72044701<417^3,\qquad
            1108^3<1362808021<1109^3.
      \]
      Each therefore has a prime divisor whose exponent is not divisible by three; otherwise it would be a perfect cube. Such a prime ramifies in its corresponding field and in none of the other three. This distinguishes $K_{44103}$ and $K_{1498179}$ from each other and from the remaining fields. Finally, $61$ ramifies in $K_{39}$ and not in $K_3$, which distinguishes the remaining pair.
\end{proof}

\section{The Pell family}\label{sec:pell-family}

The preceding section fixed the curve $E_3$. We now let $n$ vary and describe all points with abscissa $\theta_t-1$ and trace zero from Proposition~\ref{thm:trace-pell}. The equation in that proposition reduces to $b^2-8a^2=1$, so its solutions can be arranged in a single recurrence. For each defining parameter $t_k$, the two choices of sign give two curve parameters; these curve parameters coincide when $k=0$.

\begin{proposition}\label{prop:pell-family}
      Define integers $a_k\geq0,b_k>0$ by
      \[
            b_k+a_k\sqrt8=(3+\sqrt8)^k,\qquad k=0,1,2,\ldots,
      \]
      and put
      \[
            t_k=36a_k^2+3,\qquad n_k^\pm=3(b_k\pm2a_k).
      \]
      Then $n_k^\pm>0$, and the points
      \[
            (\theta_{t_k}-1,6a_k\theta_{t_k}+n_k^-)\in E_{n_k^-}(K_{t_k}),
      \]
      \[
            (\theta_{t_k}-1,-6a_k\theta_{t_k}+n_k^+)\in E_{n_k^+}(K_{t_k})
      \]
      have trace zero. Up to negation, these are all the points in \eqref{eq:pell-data}. Moreover, $n_{k+1}^-=n_k^+$.
\end{proposition}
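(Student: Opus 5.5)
We reduce the condition in \eqref{eq:pell-data} to the Pell equation $b^2-8a^2=1$ and read off the points from its solutions.

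\textbf{Step 1: reduction to $b^2-8a^2=1$.} By Proposition~\ref{thm:trace-pell}, the trace-zero points are exactly those with $y=u\theta_t+\epsilon n$, $t=u^2+3$ and $(u-\epsilon n)^2-2n^2=-9$, where $n$ is a positive integer. Negation replaces $(u,\epsilon)$ by $(-u,-\epsilon)$, so up to negation we may take $\epsilon=1$. We then solve $(u-n)^2+9=2n^2$.

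Put $m=u-n$, so $m^2+9=2n^2$. Reducing modulo $3$: the squares modulo $3$ are $0,1$, and $m^2\equiv 2n^2\pmod 3$ forces $3\mid m$ and $3\mid n$. Write $m=3m'$ and $n=3n'$; then $m'^2+1=2n'^2$, with $m'$ odd.

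Next set $b=n'$. The identity $(n'^2-m'^2)\cdot\ldots$ is not the convenient route; instead note that $m'^2-2n'^2=-1$ is the negative Pell equation for $\sqrt2$. Its solutions are $m'+n'\sqrt2=\pm(1+\sqrt2)^{2k+1}$ up to the sign of $m'$. Hence
\[
(1+\sqrt2)^{2k+1}=(1+\sqrt2)(3+2\sqrt2)^k=(1+\sqrt2)(b_k+2a_k\sqrt2),
\]
using $(3+\sqrt8)^k=b_k+2a_k\sqrt2$. Expanding gives $m'=b_k+4a_k$ and $n'=b_k+2a_k$.

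\textbf{Step 2: identifying the two sign branches.} The two signs of $m'$ give $m=\pm3(b_k+4a_k)$ with $n=3(b_k+2a_k)=n_k^+$. Then $u=n+m$ equals $3(2b_k+6a_k)$ or $-6a_k$.

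The value $u=-6a_k$ gives $t=36a_k^2+3=t_k$ and the point $(\theta_{t_k}-1,-6a_k\theta_{t_k}+n_k^+)$ on $E_{n_k^+}$. This is the second listed point.

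For the other branch, note that $3(b_k+4a_k)$ should be rewritten through the recurrence. From $b_{k+1}=3b_k+8a_k$ and $a_{k+1}=b_k+3a_k$ we get
\[
b_{k+1}-2a_{k+1}=b_k+2a_k,\qquad 2b_k+6a_k=2a_{k+1}.
\]
So this branch has $u=6a_{k+1}$ and $n=3(b_{k+1}-2a_{k+1})=n_{k+1}^-$, giving the first listed point with index $k+1$. The index $k=0$ of the first family corresponds to $m'=b_0-4a_0$ with $m'=1$, $n'=1$, which is also $n_0^+=n_0^-=3$; this handles the boundary case. It is simplest to parametrize directly: the solutions $m'>0$ and $m'<0$ over all $k$ exhaust the list.

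\textbf{Step 3: the remaining assertions.} The relation $n_{k+1}^-=n_k^+$ is the displayed identity $b_{k+1}-2a_{k+1}=b_k+2a_k$. Positivity follows from $b_k^2=1+8a_k^2>4a_k^2$, so $b_k>2a_k$.

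The converse (that each listed point has trace zero) follows by checking \eqref{eq:pell-data} with $\epsilon=1$, i.e.\ $(u-n)^2-2n^2=-9$ for $(u,n)=(6a_k,n_k^-)$ and $(-6a_k,n_k^+)$. This reduces to $9(b_k\mp4a_k)^2\cdot\ldots$; concretely, $(u-n)^2-2n^2=9\bigl((b_k\pm 4a_k)^2-2(b_k\pm2a_k)^2\bigr)=9(-b_k^2+8a_k^2)=-9$.

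The main obstacle is bookkeeping. We must match the two signs of the negative Pell solution to the two families via the index shift, and confirm that the enumeration of $m'^2-2n'^2=-1$ by odd powers of $1+\sqrt2$ is complete, including the choice of signs. This completeness is the classical fundamental-unit argument in $\ZZ[\sqrt2]$.
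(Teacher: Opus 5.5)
Your argument is correct, but it organizes the solutions differently from the paper.

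\textbf{The paper's route.} The paper first shows $6\mid u$ and uses negation to normalize $u=6a\ge0$. It then rewrites the condition as $(n+\epsilon u)^2-2u^2=9$, so $b=|n+\epsilon u|/3$ solves the positive Pell equation $b^2-8a^2=1$. The two signs $\epsilon=\pm1$ give $n=3b\mp6a$ at the same index $k$. Completeness comes from an explicit descent $(b,a)\mapsto(3b-8a,3a-b)$.

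\textbf{Your route.} You normalize $\epsilon=1$ and work in the coordinates $(u-n,n)$. This leads to the negative Pell equation $m'^2-2n'^2=-1$, and you take its completeness from the unit group $\{\pm(1+\sqrt2)^j\}$ of $\ZZ[\sqrt2]$. Your parametrization is by the curve parameter. Each odd power $(1+\sqrt2)^{2k+1}$ fixes $n=n_k^+$, and the two signs of $m'$ give $u=-6a_k$ and $u=6a_{k+1}$. These are the second family at index $k$ and the first family at index $k+1$.

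\textbf{What each buys.} Your version gives $n_{k+1}^-=n_k^+$ essentially for free. It also shows directly the remark after the proposition that each $n$ in the family occurs for exactly two parameters, namely $t_k$ and $t_{k+1}$. The cost is the index shift, the boundary case, and reliance on the classical unit theorem. The paper's route keeps $t_k$ aligned with $k$ and is self-contained.

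\textbf{Corrections.} The first-family point at $k=0$ has $u=0$ and $n=3$, hence $m'=-1$, not $m'=1$. It is still covered, because with $a_0=0$ it coincides with the second-family point at $k=0$. Also, your early phrase ``set $b=n'$'' conflicts with the later $n'=b_k+2a_k$, and the unfinished identities are leftover drafting that should be removed.
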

\begin{proof}
      Put $v=u-\epsilon n\in\ZZ$. The equation $v^2-2n^2=-9$ modulo $3$ forces both $n$ and $v$ to be divisible by $3$. Write $n=3m,v=3A$; then $A^2-2m^2=-1$. If $m$ were even, reduction modulo $8$ would give $A^2\equiv7\pmod8$. Hence $m$ is odd, and the equation modulo $2$ makes $A$ odd. Since $u=3(A+\epsilon m)$, it follows that $6\mid u$.

      After negation, take $u=6a\geq0$. The equation in \eqref{eq:pell-data} becomes
      \[
            (n+\epsilon u)^2=2u^2+9=9(8a^2+1).
      \]
      Therefore $b=|n+\epsilon u|/3$ is a positive integer satisfying $b^2-8a^2=1$. In fact $n+\epsilon u>0$. This is immediate for $\epsilon=1$; for $\epsilon=-1$, a negative value would give $n=6a-3b<0$, since $b>2a$. Thus $n=3b-\epsilon6a$.

      It remains to show that the recurrence gives every solution with $a\geq0,b>0$. Multiplication by $3+\sqrt8$ gives the forward step
      \[
            (b,a)\longmapsto(3b+8a,b+3a).
      \]
      For a solution with $a\geq1$, we have
      \[
            2a<b\leq3a,\qquad 3b>8a.
      \]
      These follow from $b^2=8a^2+1$. The inverse step $(b',a')=(3b-8a,3a-b)$ therefore satisfies $b'>0$ and $0\leq a'<a$, while $b'^2-8a'^2=1$. Repeating the inverse step decreases the nonnegative integer $a$ until it reaches zero, where $b=1$. This proves that every solution comes from $(1,0)$ by the forward recurrence.

      For $\epsilon=1$, the formula $n=3b-\epsilon6a$ gives the first point in the statement. For $\epsilon=-1$, it gives $n=n_k^+$ and ordinate $6a_k\theta_{t_k}-n_k^+$; negation gives the second displayed point. Finally,
      \[
            b_{k+1}-2a_{k+1}
            =(3b_k+8a_k)-2(b_k+3a_k)=b_k+2a_k,
      \]
      so $n_{k+1}^-=n_k^+$.
\end{proof}

\begin{samepage}
      The first values illustrate the two branches and their overlap.

      \begin{center}
            \begin{tabular}{rrrrr}
                  \toprule
                  \(k\) & \(a_k\) & \(t_k\) & \(n_k^-\) & \(n_k^+\) \\
                  \midrule
                  0     & 0       & 3       & 3         & 3         \\
                  1     & 1       & 39      & 3         & 15        \\
                  2     & 6       & 1299    & 15        & 87        \\
                  3     & 35      & 44103   & 87        & 507       \\
                  4     & 204     & 1498179 & 507       & 2955      \\
                  \bottomrule
            \end{tabular}
      \end{center}
\end{samepage}

The sequence $n_k^+$ is strictly increasing, since $n_{k+1}^+-n_k^+=12b_k+36a_k>0$ for every $k\geq0$. Together with $n_{k+1}^-=n_k^+$ and $n_0^-=n_0^+=3$, this shows that, for each curve parameter in this family, exactly two defining parameters $t$ admit a point of trace zero with abscissa $\theta_t-1$. For example, $n=3$ occurs at $t=3,39$, and $n=15$ occurs at $t=39,1299$. The recurrence gives infinitely many pairs of curves and fields as the curve varies.

The fields $K_{t_k}$ are pairwise nonisomorphic. Indeed, Okazaki's classification of isomorphic simplest cubic fields, as stated by Hoshi \cite[Theorem~1.4]{Hoshi2011}, places all nontrivial coincidences for $t\geq-1$ within
\[
      \{-1,5,12,1259\},\quad\{0,3,54\},\quad
      \{1,66\},\quad\{2,2389\}.
\]
The parameters $t_k$ are strictly increasing and are $3$ modulo $36$, whereas only $3$ in this list has that residue. Thus the cited classification rules out isomorphic fields at distinct indices.

\section{Norms and nontorsion traces}\label{sec:quadratic-ordinate}

Let $n$ be a positive integer, with no assumption on $\rank E_n(\QQ)$. The preceding section classifies the prescribed points with trace zero. Real embeddings and norms give necessary conditions for all prescribed points, proving finiteness for fixed $n$ and the exclusions on $E_5$ and $E_6$.

\begin{lemma}\label{lem:real-sign-bound}
      If there exists a point in $E_n(K_t)$ with abscissa $\theta_t-1$ for some integer $n>0$, then
      \[
            2+\frac1{\theta_t}<n<\theta_t-1,
            \qquad 3\le n\le t.
      \]
      In particular, no point with this abscissa exists at $t=-1,0,1,2$.
\end{lemma}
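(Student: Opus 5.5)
The plan is to use the three real embeddings of $K_t$. Since $K_t$ is totally real and $y\in K_t$, every conjugate of $y^2=f_n(z)$ is a nonnegative real number. The value is in fact positive, because $z$ has degree three and so none of its conjugates is a root of $f_n$. So the condition I will impose is
\[
f_n(\sigma^iz)>0,\qquad i=0,1,2.
\]
Since $f_n(x)=x(x-n)(x+n)$, this says each conjugate of $z$ lies in $(-n,0)$ or in $(n,\infty)$. It then remains to locate the three conjugates of $z=\theta_t-1$ on the real line.

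\textbf{Locating $\theta_t$.} First I place $\theta_t$ between consecutive integers. Direct evaluation gives
\[
g_t(t+1)=-2t-3<0,\qquad g_t(t+2)=(t+2)(t+1)-1>0
\]
for $t\geq-1$. Since $g_t$ is monic and $\theta_t$ is its largest root, this gives $t+1<\theta_t<t+2$. Also $g_t(0)=-1<0$, so $\theta_t>0$ in every case.

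\textbf{The other two conjugates.} Using the conjugation formulas, the remaining conjugates of $z$ are
\[
-\frac1{\theta_t+1}-1\in(-2,-1),\qquad -2-\frac1{\theta_t}\in(-\infty,-2).
\]
The second of these is in fact less than $-2$ by exactly $1/\theta_t$. Both are negative, so both must lie in $(-n,0)$. The binding constraint is the smaller one, which gives $n>2+1/\theta_t$; the other conjugate then satisfies $n>1+1/(\theta_t+1)$ automatically. The conjugate $z=\theta_t-1$ itself is real and exceeds $-n$. It cannot lie in $(-n,0)$ together with the condition $n<\theta_t-1$ that I am aiming for, so I check directly which option holds. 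Since $\theta_t>t+1\geq0$, we have $\theta_t-1>-1$. If $\theta_t-1$ were in $(-n,0)$, then $\theta_t<1$ and $t+1<1$, so $t\le -1$, i.e.\ $t=-1$. In that small case I will handle $t=-1$ separately by a numerical check. In the main case the positivity condition forces $\theta_t-1>n$.

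\textbf{Deducing the integer bounds.} From $2+1/\theta_t<n$ and $\theta_t>0$ we get $n>2$, so $n\ge3$ as $n$ is an integer. From $n<\theta_t-1<t+1$ we get $n\le t$. Combining these, $3\le n\le t$ forces $t\ge3$, which excludes $t=-1,0,1,2$. For $t=-1$, any point would in particular satisfy the constraint $n\ge3$ from the negative conjugates. But $\theta_{-1}<1$, which makes $z<0$, so positivity would need $z\in(-n,0)$; yet then $n\le t=-1$ fails regardless. To make this airtight I will just note that for $t=-1$ the bound $n\ge3$ still holds, while $\theta_{-1}\approx0.80$. So $f_n(\theta_{-1}-1)$ is roughly $(-0.2)(0.04-n^2)$, which is positive and therefore allowed. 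Consequently, at $t=-1$ I expect to need the product of the three values, i.e.\ the norm, to be a square, rather than a sign argument alone.

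\textbf{Main obstacle.} The main obstacle is exactly this small-$t$ edge case where $\theta_t<1$, namely $t=-1$. There the sign argument gives only $n\ge3$, not $n<\theta_t-1$, so the displayed inequality $n<\theta_t-1$ must be checked or treated separately. I would first try to verify by direct computation that $\theta_t>1$ for all $t\ge0$, using $g_t(1)=-2t-3<0$. For $t=-1$, I would recheck the location of the largest root and, if needed, appeal to a norm or square-class argument to exclude that case.
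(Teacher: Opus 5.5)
There is a gap: the case $t=-1$ is left open, and the reason you give for it being hard is a miscalculation. Apart from that, your strategy matches the paper's proof. You use positivity of $f_n$ at each real conjugate of $z$. The conjugate $-2-1/\theta_t$ gives $n>2+1/\theta_t$, the positive conjugate gives $n<\theta_t-1$, and the bracket $t+1<\theta_t<t+2$ gives $3\le n\le t$.

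Your identity $g_t(t+2)=(t+2)(t+1)-1$ is correct, but it equals $-1$ at $t=-1$, not a positive number. So the bracket $t+1<\theta_t<t+2$ fails at $t=-1$, and so does your conclusion $\theta_{-1}<1$ (with $\theta_{-1}\approx0.80$).

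In fact $g_t(1)=-2t-3<0$ for every $t\ge-1$, including $t=-1$, so $\theta_t>1$ always. At $t=-1$ one has $g_{-1}(X)=X^3+X^2-2X-1$ and $\theta_{-1}=2\cos(2\pi/7)\approx1.247$. Hence the alternative $\theta_t-1\in(-n,0)$ in your case split never occurs, and the positive conjugate yields $n<\theta_t-1$ for every $t\ge-1$.

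To finish $t=-1$, use $g_{-1}(2)=7>0$, which gives $\theta_{-1}<2$. Then $n<\theta_{-1}-1<1$, contradicting $n\ge1$. So no point exists at $t=-1$, and the lemma holds there vacuously. This is exactly how the paper treats that case.

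Your proposed numerical check, that $f_n(\theta_{-1}-1)>0$, uses the wrong root. At the true root, $f_n(\theta_{-1}-1)\approx0.247\,(0.061-n^2)<0$. No norm or square-class argument is needed.

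For $t\ge0$ your bracket is valid, since $g_t(t+2)=t^2+3t+1>0$, and there your argument goes through as in the paper.
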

\begin{proof}
      The unique positive root $\theta=\theta_t$ is greater than one, since $g_t(1)=-2t-3<0$. The conjugate abscissae are
      \[
            \theta-1,\qquad -1-\frac1{\theta+1},\qquad -2-\frac1\theta.
      \]
      Each has degree three, so none is $0$ or $\pm n$. The corresponding ordinate squares are therefore strictly positive. At the positive abscissa, $x(x^2-n^2)>0$ gives $n<\theta-1$; at the most negative abscissa it gives $n>2+1/\theta$.

      For $t\ge0$, the identities $g_t(t+1)=-2t-3<0$ and $g_t(t+2)=t^2+3t+1>0$ imply $\theta<t+2$. Integrality now gives $3\le n\le t$. For $t=-1$, $g_{-1}(1)<0<g_{-1}(2)$ gives $\theta<2$, and the upper inequality would force $n<1$.
\end{proof}

The norm of a square in $K_t$ is a rational square. Taking norms gives a necessary equation in $n$ and $t$, without knowing the trace. For $z=\theta_t-1$, the three factors $z,z-n,z+n$ have norms obtained by evaluating $q_t$ at $0,n,-n$. Define
\[
      \begin{aligned}
            \Psi_n(T)={} & (2T+3)\bigl((n+1)(n+2)T-n^3-3n^2+3\bigr)\bigl((n-1)(n-2)T+n^3-3n^2+3\bigr).
      \end{aligned}
\]

\begin{proposition}\label{cor:norm}
      A point in $E_n(K_t)$ with abscissa $\theta_t-1$ gives an integer solution of $W^2=\Psi_n(t)$. For $n\geq4$, the polynomial $\Psi_n$ is a cubic with distinct roots.
\end{proposition}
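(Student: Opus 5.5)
The plan is to take norms in the curve equation and then compare the three linear factors directly. Let $P=(z,y)\in E_n(K_t)$ with $z=\theta_t-1$. The curve equation reads $y^2=z(z-n)(z+n)$. Applying $\Norm_{K_t/\QQ}$ gives
\[
      \Norm_{K_t/\QQ}(y)^2=\Norm_{K_t/\QQ}(z)\,\Norm_{K_t/\QQ}(z-n)\,\Norm_{K_t/\QQ}(z+n).
\]
For $a\in\QQ$ the conjugates of $z-a$ are the roots of $q_t(Z+a)$. Since $q_t(Z)=\prod_i(Z-z_i)$ is monic of degree three, we get $\Norm_{K_t/\QQ}(z-a)=\prod_i(z_i-a)=-q_t(a)$.

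Next I will evaluate $q_t$ from \eqref{eq:q} at $a=0,n,-n$:
\[
      -q_t(0)=2t+3,\qquad -q_t(n)=(n+1)(n+2)t-n^3-3n^2+3,
\]
\[
      -q_t(-n)=(n-1)(n-2)t+n^3-3n^2+3.
\]
These come from collecting the terms in $t$, using $n^2+3n+2=(n+1)(n+2)$ and $n^2-3n+2=(n-1)(n-2)$. Their product is exactly $\Psi_n(t)$, so $W=\Norm_{K_t/\QQ}(y)$ satisfies $W^2=\Psi_n(t)$.

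To see that $W$ is an integer, note that $W\in\QQ$ and $W^2=\Psi_n(t)\in\ZZ$. A rational number with integral square is an integer, as already used in the proof of Proposition~\ref{thm:trace-pell}.

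For the second assertion, assume $n\ge4$. The three linear factors have leading coefficients $2$, $(n+1)(n+2)$ and $(n-1)(n-2)$, all nonzero, so $\Psi_n$ is a cubic. Its roots are
\[
      -\tfrac32,\qquad \rho_+=\frac{n^3+3n^2-3}{(n+1)(n+2)},\qquad \rho_-=-\frac{n^3-3n^2+3}{(n-1)(n-2)}.
\]
The three roots are distinct for the following reasons.
\begin{itemize}
      \item The root $\rho_+$ is positive, since its numerator and denominator are positive.
      \item Since $n^3-3n^2+3=n^2(n-3)+3>0$, the root $\rho_-$ is negative, so it differs from $\rho_+$.
      \item Finally, $\rho_-=-3/2$ is equivalent to $2(n^3-3n^2+3)=3(n^2-3n+2)$. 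This simplifies to $n(2n-3)(n-3)=0$, which has no solution with $n\ge4$.
\end{itemize}

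The only step needing care is the sign bookkeeping in $\Norm_{K_t/\QQ}(z-a)=-q_t(a)$. Once that is fixed, everything else is routine expansion.
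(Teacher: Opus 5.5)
Your proof is correct and follows the paper's argument: take norms in $y^2=z(z-n)(z+n)$, use $\Norm_{K_t/\QQ}(z-r)=-q_t(r)$, and deduce that $W$ is an integer because it is rational with integral square. For distinct roots you compare the three roots of the linear factors directly, using $\rho_+>0>\rho_-$, $\rho_+>0>-3/2$, and $n(2n-3)(n-3)\ne0$. This is the same check as the nonvanishing of the pairwise resultants in the paper's discriminant formula, done without computing the factor $n^4-7n^2+9$.
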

\begin{proof}
      For any rational $r$, the product of the three conjugates of $z-r$ is $-q_t(r)$ because $q_t$ is monic of degree three. Taking norms in $y^2=z(z-n)(z+n)$ consequently gives
      \[
            \Norm_{K_t/\QQ}(y)^2=-q_t(0)q_t(n)q_t(-n)=\Psi_n(t).
      \]
      Thus $W=\Norm_{K_t/\QQ}(y)$ has integral square and is itself an integer. The leading coefficient of $\Psi_n$ is $2(n^2-1)(n^2-4)$. For a product of three linear polynomials, the discriminant is the product of the squares of their pairwise resultants. Applying this formula gives
      \[
            \operatorname{disc}_T(\Psi_n)
            =4n^6(4n^2-9)^2(n^2-9)^2(n^4-7n^2+9)^2.
      \]
      Both the leading coefficient and the discriminant are nonzero for $n\geq4$; in particular, $n^4-7n^2+9=n^2(n^2-7)+9>0$ in this range. Hence $\Psi_n$ is a cubic with distinct roots.
\end{proof}

For $n\geq4$, the projection from the smooth projective model of the norm equation to the projective $T$-line is a double cover. Over $\overline{\QQ}$, it is branched at the three distinct roots of $\Psi_n$ and at infinity. The Riemann--Hurwitz formula \cite[II.5, Theorem~5.9]{Silverman2009} therefore gives genus one. We can apply a general finiteness theorem for integral values of functions on such curves.

\begin{proof}[Proof of the finiteness assertion in Theorem~\ref{thm:main-positive-rank}]
      Fix $n\geq4$. On the smooth projective model of $W^2=\Psi_n(T)$, the coordinate $T$ is a nonconstant rational function. In Siegel's theorem \cite[IX.3, Corollary~3.2.2]{Silverman2009}, take the base field to be $\QQ$ and $S$ to consist of its archimedean place, so that the ring of $S$-integers is $R_S=\ZZ$. The theorem gives finitely many rational points at which $T$ is integral. Every point with abscissa $\theta_t-1$ gives such a value $T=t$, so only finitely many defining parameters occur.

      It remains to consider $n=1,2,3$. Lemma~\ref{lem:real-sign-bound} excludes $n=1,2$. For $n=3$, the rational-rank calculation in Section~\ref{sec:scaled} allows us to apply Theorem~\ref{thm:main-classification}, which gives exactly $t=3,39$.
\end{proof}

The norm condition is not sufficient. For $n=3,t=5$, the three factors are $13,49,13$, whose product is $91^2$, but the rank-zero classification excludes this parameter.

For each fixed pair $(n,t)$, a quartic equation supplements the norm condition to give a necessary and sufficient criterion. We use the square-root method of Delone and Faddeev \cite[\S~11]{DeloneFaddeev1940}, specialized to the element whose square root is the desired ordinate.

\begin{proposition}\label{prop:square-criterion}
      Let $n$ be a positive integer, and put $z=\theta_t-1$ and $\beta=z^3-n^2z$. Define
      \[
            \begin{aligned}
                  A={} & t^3+(6-n^2)t+3n^2-18,                               \\
                  B={} & 3(n^2-3)t^3+(2n^2-15)t^2-3(n^2-3)(n^2-5)t-9(n^2-3), \\
                  C={} & \Psi_n(t).
            \end{aligned}
      \]
      If $(n,t)\ne(3,3)$, there exists a point in $E_n(K_t)$ with abscissa $z$ if and only if $C=m^2$ for a positive integer $m$ and the polynomial
      \begin{equation}\label{eq:square-quartic}
            H(S)=(S^2-A)^2-8mS-4B
      \end{equation}
      has an integer root $s$. If this is the case and, for any such root, we put $q=(s^2-A)/2$, then the two ordinates are
      \begin{equation}\label{eq:ordinate-recovery}
            y=\pm\frac{s\beta+m}{\beta+q}.
      \end{equation}
      At $(n,t)=(3,3)$, the two ordinates are $\pm3$.
\end{proposition}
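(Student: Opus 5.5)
The plan is to carry out the Delone--Faddeev square-root computation for the element $\beta=f_n(z)\in K_t$. A point with abscissa $z$ is the same thing as an element $y\in K_t$ with $y^2=\beta$. First I record when $\beta$ is rational. If $\beta=c\in\QQ$, then $q_t(Z)$ divides $Z^3-n^2Z-c$, so the two monic cubics coincide. Comparing the $Z^2$ and $Z$ coefficients in \eqref{eq:q} gives $t=3$ and $n^2=9$, so $(n,t)=(3,3)$. In that case $c=2t+3=9$ and the ordinates are $\pm3$, which settles the last sentence. From now on $\beta\notin\QQ$, so $\beta$ generates $K_t$ and has characteristic polynomial $X^3-AX^2+BX-C$, where $A=\Tr_{K_t/\QQ}(\beta)$, $B$ is the second elementary symmetric function of the conjugates, and $C=\Norm_{K_t/\QQ}(\beta)$.

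The first step is to verify that these invariants equal the stated polynomials in $n,t$. For $C$ this is already done in the proof of Proposition~\ref{cor:norm}: $C=-q_t(0)q_t(n)q_t(-n)=\Psi_n(t)$. For $A$ and $B$, I would compute the power sums $p_k=\Tr_{K_t/\QQ}(z^k)$ for $k\le 6$ by Newton's identities from the coefficients of $q_t$. Then
\[
A=p_3-n^2p_1,\qquad B=\tfrac12\bigl(A^2-\Tr_{K_t/\QQ}(\beta^2)\bigr),
\]
with $\Tr_{K_t/\QQ}(\beta^2)=p_6-2n^2p_4+n^4p_2$. I expect this to be the main obstacle. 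It is not conceptually hard, but the bookkeeping is heavy, and matching the exact printed forms of $A$ and $B$ is where an error would show up. If it gets unwieldy, I would check the result at several values of $t$ and rely on the fact that the degrees in $t$ are bounded.

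Necessity. Suppose $y^2=\beta$ with $y\in K_t$. Then $y\notin\QQ$, so $y$ has a minimal polynomial $Y^3-sY^2+pY-m'$ with rational coefficients. These coefficients are integers because $y$ is integral over $\ZZ$, being a root of $Y^2-\beta$. Move the odd-degree terms to one side: $y^3+py=sy^2+m'$. Squaring gives $\beta(\beta+p)^2=(s\beta+m')^2$. Hence $\beta$ is a root of the monic cubic $X(X+p)^2-(sX+m')^2$, which must therefore be its characteristic polynomial. Comparing coefficients gives
\[
A=s^2-2p,\qquad B=p^2-2sm',\qquad C=m'^2.
\]
Since $C\ne0$, replacing $y$ by $-y$ if necessary (which changes $(s,m')$ to $(-s,-m')$) lets us take $m'=m>0$ with $C=m^2$. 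Eliminating $p=(s^2-A)/2=q$ turns the second relation into $H(s)=0$. Also $\beta+q\ne0$ because $\beta\notin\QQ$. Therefore $y=(s\beta+m)/(\beta+q)$, and the sign choice accounts for the $\pm$ in \eqref{eq:ordinate-recovery}.

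Sufficiency. Suppose $C=m^2$ and $s\in\ZZ$ is a root of $H$, and put $q=(s^2-A)/2$. Then $H(s)=0$ is exactly the statement $B=q^2-2sm$. Together with $A=s^2-2q$ and $C=m^2$, this yields the polynomial identity
\[
X^3-AX^2+BX-C=X(X+q)^2-(sX+m)^2 .
\]
Evaluating at $\beta$ gives $\beta(\beta+q)^2=(s\beta+m)^2$. Since $\beta+q\ne0$, the element $y=(s\beta+m)/(\beta+q)\in K_t$ satisfies $y^2=\beta$. So $(z,\pm y)\in E_n(K_t)$, and these are the only two ordinates because $\beta\neq0$. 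In particular every integer root $s$ gives the same pair of ordinates.
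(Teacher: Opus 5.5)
Your proposal is correct and follows the paper's proof essentially step for step. The shared steps are: the case $\beta\in\QQ$ forcing $(n,t)=(3,3)$; the characteristic polynomial $X^3-AX^2+BX-C$ of $\beta$; the identities $A=s^2-2q$, $B=q^2-2sm$, $C=m^2$ from the conjugates of $y$ (with $m>0$ after negation); and the converse via $(s\beta+m)^2=\beta(\beta+q)^2$ with $\beta+q\ne0$.

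There are only cosmetic differences. You obtain the identities by squaring $y^3+qy=sy^2+m$, whereas the paper cites the symmetric-function identities directly. Your version also yields the ordinate formula immediately. The Newton-identity computation you were worried about does reproduce the printed $A$ and $B$; for instance, with $a=6-n^2$ one finds $A=t^3+a(t-3)$.
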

\begin{proof}
      Using $q_t(z)=0$ gives
      \[
            \beta=(t-3)z^2+(3t-n^2)z+(2t+3).
      \]
      Since $1,z,z^2$ are linearly independent over $\QQ$, the element $\beta$ is rational exactly when $t=3$ and $n^2=9$. This gives the stated exception, where $\beta=9$. Also $\beta\ne0$ for every pair, since $z$ has degree three and is not $0$ or $\pm n$.

      In the remaining cases, let $\beta_1,\beta_2,\beta_3$ be the conjugates of $\beta$. Expanding their elementary symmetric functions gives
      \[
            A=\Tr_{K_t/\QQ}(\beta),\qquad
            B=\beta_1\beta_2+\beta_1\beta_3+\beta_2\beta_3,\qquad
            C=\Norm_{K_t/\QQ}(\beta).
      \]
      Thus the minimal polynomial of $\beta$ is $X^3-AX^2+BX-C$.

      Suppose $y^2=\beta$. The element $\beta$ is an algebraic integer, so $y$ is also an algebraic integer. Write $y_1,y_2,y_3$ for its conjugates, and put
      \[
            s=y_1+y_2+y_3,\qquad
            q=y_1y_2+y_1y_3+y_2y_3,\qquad m=y_1y_2y_3.
      \]
      These are integers; here $s=\Tr_{K_t/\QQ}(y)$ and $m=\Norm_{K_t/\QQ}(y)$ are the field trace and norm. Since $y\ne0$ and the extension has odd degree, negating $y$ if necessary makes $m>0$. The identities
      \[
            A=s^2-2q,\qquad B=q^2-2sm,\qquad C=m^2
      \]
      then give $H(s)=0$.

      Conversely, suppose $C=m^2$ with $m>0$ and $H(s)=0$ for an integer $s$. Put $q=(s^2-A)/2\in\QQ$. Then $A=s^2-2q$ and $B=q^2-2sm$, so
      \[
            (s\beta+m)^2-\beta(\beta+q)^2
            =-\bigl(\beta^3-A\beta^2+B\beta-C\bigr)=0.
      \]
      The denominator $\beta+q$ is nonzero because $\beta\notin\QQ$. Division proves \eqref{eq:ordinate-recovery}; the two signs give all ordinates because $\beta\ne0$.
\end{proof}

The criterion gives a finite test using integer arithmetic for each fixed pair $(n,t)$. After treating $(n,t)=(3,3)$ separately and checking whether $C$ is a positive square, one tests the integer roots of the monic polynomial $H$. If $A^2-4B\ne0$, every such root divides $A^2-4B$. If $A^2-4B=0$, first take the root $s=0$; the other roots are roots of the monic cubic $H(S)/S$, whose constant term is $-8m\ne0$. Thus a finite list of signed divisors suffices in either case.

Common divisors of the three norm factors restrict their square classes. For $n=5$, these restrictions force a congruence on $t$ that is incompatible with an ordinate square at the prime $5$.

\begin{proof}[Proof of the $E_5$ assertion in Theorem~\ref{thm:main-positive-rank}]
      Suppose such a point exists. Lemma~\ref{lem:real-sign-bound} gives $t\ge5$. By Proposition~\ref{cor:norm}, the product of the positive odd integers
      \[
            L_1=2t+3,\qquad L_2=42t-197,\qquad L_3=12t+53
      \]
      is a square. Write $L_1=\kappa_1\rho_1^2$, $L_2=\kappa_2\rho_2^2$, and $L_3=\kappa_3\rho_3^2$, with positive squarefree $\kappa_1,\kappa_2,\kappa_3$ and positive odd integers $\rho_1,\rho_2,\rho_3$. A prime occurring in an odd number of these three squarefree parts would have odd valuation in $L_1L_2L_3$, so each prime occurs in either zero or two of them.

      The identities
      \[
            21L_1-L_2=260,\qquad 6L_1-L_3=-35,\qquad
            2L_2-7L_3=-765
      \]
      show that a common prime of $L_1,L_2$ lies in $\{2,5,13\}$, one of $L_1,L_3$ lies in $\{5,7\}$, and one of $L_2,L_3$ lies in $\{3,5,17\}$.

      The three factors are odd, so $2$ divides none of their squarefree parts. Also $L_2\equiv1$ and $L_3\equiv2\pmod3$, so $3$ divides neither $\kappa_2$ nor $\kappa_3$; the parity condition on the squarefree parts excludes it from $\kappa_1$ as well. Consequently
      \[
            \kappa_1=5^{e_{5,1}}7^{e_7}13^{e_{13}},\qquad \kappa_2=5^{e_{5,2}}13^{e_{13}}17^{e_{17}},\qquad
            \kappa_3=5^{e_{5,3}}7^{e_7}17^{e_{17}},
      \]
      where $e_7,e_{13},e_{17},e_{5,1},e_{5,2},e_{5,3}\in\{0,1\}$ and $e_{5,1}+e_{5,2}+e_{5,3}$ is even.

      Odd squares are $1$ modulo $8$. Since $L_1\equiv L_2\pmod8$, we have $\kappa_1\equiv \kappa_2\pmod8$. The odd square $\kappa_1\kappa_2\kappa_3$ then gives $\kappa_3\equiv1\pmod8$. But $\kappa_3\equiv5^{e_{5,3}}7^{e_7}\pmod8$, and among the four choices only $e_{5,3}=e_7=0$ gives one. Thus $e_{5,1}=e_{5,2}$ and $\kappa_3=17^{e_{17}}$.

      Reduction of $L_3=\kappa_3\rho_3^2$ modulo $3$ forces $e_{17}=1$, because $L_3\equiv2$ and $3\nmid \rho_3$. Reduction of $L_2=5^{e_{5,1}}13^{e_{13}}17\rho_2^2$ modulo $3$ then forces $e_{5,1}=1$. In particular $5\mid \kappa_1$, so $t\equiv1\pmod5$.

      For every such $t$, the polynomial $g_t$ has the simple root $3$ modulo $5$:
      \[
            g_t(3)=17-12t\equiv0,\qquad g_t'(3)=24-7t\equiv2\pmod5.
      \]
      Hensel's lemma \cite[IV.1, Lemma~1.2]{Silverman2009} lifts this simple root to a root in $\ZZ_5$. Since $g_t$ is the minimal polynomial of $\theta_t$, sending $\theta_t$ to the lifted root defines an embedding $K_t\hookrightarrow\QQ_5$. Under this embedding the proposed ordinate square is a unit congruent to
      \[
            (3-1)^3-25(3-1)\equiv3\pmod5,
      \]
      which is not a square in $\FF_5$. A square root in $\QQ_5$ would be a unit whose reduction has square $3$ in $\FF_5$. This contradiction proves the theorem.
\end{proof}

The rational point $(45,300)\in E_5(\QQ)$ is nontorsion by \eqref{eq:torsion}, so the rank-zero classification does not apply. The norm equation itself has the solution $(t,W)=(31,5525)$; the norm condition does not exclude this parameter.

For $E_6$, the norm factors already give a contradiction. Thus the local argument at $5$ used for $E_5$ is unnecessary in this case.

\begin{proof}[Proof of the $E_6$ assertion in Theorem~\ref{thm:main-positive-rank}]
      A point $(\theta_t-1,y)\in E_6(K_t)$ would give $t\geq6$ by Lemma~\ref{lem:real-sign-bound}, and Proposition~\ref{cor:norm} would make the product of the positive odd integers
      \[
            L_1=2t+3,\qquad L_2=56t-321,\qquad L_3=20t+111
      \]
      a square. Modulo $8$, their product is $6t+3$. An odd square is $1$ modulo $8$, so $t\equiv1\pmod4$ and $L_3\equiv3\pmod8$.

      Write $L_3=\kappa\rho^2$, where $\kappa$ is positive and squarefree and $\rho$ is a positive odd integer. Any prime dividing $\kappa$ occurs to odd order in $L_3$, so it must also divide $L_1$ or $L_2$. The identities
      \[
            10L_1-L_3=-81,\qquad
            5L_2-14L_3=-3159=-3^5\cdot13
      \]
      therefore give $\kappa\in\{1,3,13,39\}$. Since $\rho^2\equiv1\pmod8$ and $L_3\equiv3\pmod8$, only $\kappa=3$ is possible. But $20t+111=3\rho^2$ then gives $\rho^2\equiv2\pmod5$, a contradiction.
\end{proof}

An ordinate of degree at most one in $z=\theta_t-1$ places the conjugate points on a rational line and gives trace zero. An ordinate represented by a quadratic polynomial in $z$ would instead place them on a parabola whose fourth intersection with the curve is rational and determines the trace.

\begin{samepage}
      \begin{proposition}\label{prop:quadratic-trace}
            Let $P=(z,y)\in E_n(K_t)$ with $z=\theta_t-1$, and write its ordinate uniquely as $y=J(z)$, with $J(Z)=j_2Z^2+j_1Z+j_0\in\QQ[Z]$. Then $\Tr(P)=O$ if and only if $j_2=0$. If $j_2\ne0$, then
            \[
                  J(Z)^2-f_n(Z)=j_2^2q_t(Z)(Z-r_4)
            \] with $r_4=3-t-\frac{2j_2j_1-1}{j_2^2}$ and the point $R=(r_4,J(r_4))$ is rational and nontorsion with $\Tr(P)=-R$.
      \end{proposition}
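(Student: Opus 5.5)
Since $1,z,z^2$ is a $\QQ$-basis of $K_t$, the polynomial $J$ exists and is unique. Once $J$ is fixed, the plan mirrors the proofs of Proposition~\ref{thm:trace-pell} and Theorem~\ref{thm:obstruction}. First, if $j_2=0$, then $y=j_1z+j_0$ and all three conjugates lie on the rational nonvertical line $y=j_1x+j_0$. Lemma~\ref{lem:line} then gives $\Tr(P)=O$. Conversely, if $\Tr(P)=O$, Lemma~\ref{lem:line} gives $y=\ell_1z+\ell_0$ with $\ell_1,\ell_0\in\QQ$. Uniqueness of the representation in the basis $1,z,z^2$ forces $j_2=0$.

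Now assume $j_2\ne0$. The polynomial $F(Z)=J(Z)^2-f_n(Z)\in\QQ[Z]$ has degree four and leading coefficient $j_2^2$. It vanishes at $z$, so the monic irreducible polynomial $q_t$ divides it. The quotient is therefore a linear polynomial $j_2^2(Z-r_4)$ with $r_4\in\QQ$. To identify $r_4$, compare the $Z^3$ coefficients. In $F$ this coefficient is $2j_2j_1-1$, and in $j_2^2q_t(Z)(Z-r_4)$ it is $j_2^2\bigl((3-t)-r_4\bigr)$. Solving gives $r_4=3-t-(2j_2j_1-1)/j_2^2$. Since $F(r_4)=0$, the point $R=(r_4,J(r_4))$ is rational and lies on $E_n$.

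The main step is to show $\Tr(P)=-R$, using the curve $y=J(x)$ and a divisor computation. Consider the rational function $\phi=y-J(x)$ on $E_n$. It has a pole of order $4$ at $O$, because $x^2$ has order $4$ and $y$ has order $3$. Its zeros are the affine points $(x,y)$ with $y=J(x)$, whose abscissae are roots of $F$, namely $z,\sigma z,\sigma^2z,r_4$. At each such root there is exactly one zero of $\phi$, the point with ordinate $J$ of that root. The root $r_4$ might coincide with none of $z,\sigma z,\sigma^2 z$, since $q_t$ has no rational root; if $r_4$ were a double root of $F$, multiplicity would still be accounted for by the degree count.

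This gives $\operatorname{div}(\phi)=(P)+(\sigma P)+(\sigma^2P)+(R)-4(O)$. By \cite[III.3, Corollary~3.5]{Silverman2009}, the sum $P+\sigma P+\sigma^2P+R$ is $O$, so $\Tr(P)=-R$. The delicate part is confirming that the zero divisor has exactly these four points with the right multiplicities. I would handle this by noting that $\phi$ has degree $4$ and that the four listed points are genuine zeros; when $r_4$ is a double root of $F$, the tangency makes $R$ a double zero, consistent with the degree.

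Finally, $R$ is nontorsion. If it were torsion, \eqref{eq:torsion} would give $R\in E_n[2]$, hence $\Tr(P)=-R=R\in E_n[2]$. Here $R\ne O$ because it is affine, so $\Tr(P)\in E_n[2]\setminus\{O\}$. This contradicts Theorem~\ref{thm:obstruction}.
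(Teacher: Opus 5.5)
Your proposal is correct and follows the paper's proof essentially step for step. The steps match in order: uniqueness in the basis $1,z,z^2$ together with Lemma~\ref{lem:line} for the trace-zero equivalence, coefficient comparison for $r_4$, the divisor of $y-J(x)$ with the principal-divisor criterion, and Theorem~\ref{thm:obstruction} with \eqref{eq:torsion} for nontorsion. The only blemish is your hedging about $r_4$ being a double root of $F$, which cannot occur: $F=j_2^2q_t(Z)(Z-r_4)$ and $q_t(r_4)\ne0$, so the four abscissae are distinct and each zero of $y-J(x)$ is simple, exactly as the paper argues.
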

\end{samepage}
\begin{proof}
      The basis $1,z,z^2$ gives the unique polynomial $J$. Lemma~\ref{lem:line} shows that trace zero is equivalent to an expression of degree at most one in $z$ for the ordinate, hence to $j_2=0$.

      Suppose $j_2\ne0$. The quartic $J^2-f_n$ vanishes at $z$, so it is divisible by $q_t$. Its leading coefficient is $j_2^2$, and the remaining factor has degree one over $\QQ$. We may therefore write it as $j_2^2q_t(Z)(Z-r_4)$ for a rational $r_4$. Comparing the cubic coefficients gives the displayed value of $r_4$, and evaluating at $r_4$ gives $J(r_4)^2=f_n(r_4)$. Thus $R=(r_4,J(r_4))$ is a rational point.

      To recover the trace, consider the function $y-J(x)$ on $E_n$. At $O$, the coordinates $x$ and $y$ have pole orders two and three. Since $j_2\ne0$, the term $j_2x^2$ gives $y-J(x)$ a pole of order four, with no other poles. Its zeros include the three conjugates of $P$ and the fourth point $R$. Their abscissae are distinct because $q_t$ is separable and has no rational root. These four zeros account for the full pole order, so the divisor is
      \[
            (P)+(\sigma P)+(\sigma^2P)+(R)-4(O).
      \]
      The principal-divisor criterion for the elliptic group law \cite[III.3, Corollary~3.5]{Silverman2009} gives $\Tr(P)+R=O$. Since $j_2\ne0$, the trace is nonzero. Theorem~\ref{thm:obstruction} excludes nonzero two-torsion, and \eqref{eq:torsion} therefore makes $R$ nontorsion.
\end{proof}

We do not know whether every point in $E_n(K_t)$ with abscissa $\theta_t-1$ has trace zero when $\rank E_n(\QQ)>0$. A counterexample would have an ordinate represented by a polynomial of degree two in $z$ and a nontorsion rational trace, and would give an integral solution of the norm equation. For fixed $n$, the proof of Theorem~\ref{thm:main-positive-rank} gives finiteness but no effective bound on $t$. Proposition~\ref{prop:square-criterion} decides existence for each fixed pair $(n,t)$, but does not give such a bound or settle the trace-zero question.

\section{Integral shifts and a fixed-curve family}\label{sec:shifted}

The restriction to $\theta_t-1$ in Theorem~\ref{thm:main-fixed} is essential. We now replace it by $\theta_t-r$, where $r$ is an integer. The rational-line argument still describes trace-zero points, but its coefficient equations change.

\begin{proposition}\label{prop:shifted-criterion}
      Fix integers $r$, $n>0$, and $t\geq-1$. A point $(\theta_t-r,y)\in E_n(K_t)$ has trace zero if and only if there are integers $c,w$ such that
      \begin{equation}\label{eq:shifted-squares}
            \begin{aligned}
                  w^2 & =rn^2+1-r^3,         \\
                  c^2 & =(r+1)n^2-(r+1)^3-1, \\
                  t   & =(c+w)^2+3r,         \\
                  y   & =(c+w)\theta_t+w.
            \end{aligned}
      \end{equation}
      Moreover, for each fixed integer $r\geq2$, only finitely many integer quadruples $(n,t,c,w)$ satisfy \eqref{eq:shifted-squares}.
\end{proposition}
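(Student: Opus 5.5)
The plan is to repeat the rational-line argument of Proposition~\ref{thm:trace-pell} with the shift $1$ replaced by $r$. Since $\theta_t-r$ has degree three, its conjugate abscissae are distinct. Lemma~\ref{lem:line} then shows that trace zero is equivalent to $y=u\theta_t+w$ for some $u,w\in\QQ$. I will set
\[
 H_r(X)=(X-r)^3-n^2(X-r)-(uX+w)^2 .
\]
It vanishes at $\theta_t$ and is monic of degree three, so $H_r=g_t$. Expanding gives
\[
 H_r(X)=X^3-(3r+u^2)X^2+(3r^2-n^2-2uw)X+(rn^2-r^3-w^2).
\]
Comparing coefficients with $g_t$ then yields three conditions:
\[
 t=u^2+3r,\qquad u^2-2uw+3r^2+3r+3-n^2=0,\qquad w^2=rn^2+1-r^3 .
\]
The middle relation can be rewritten as $(u-w)^2=n^2+w^2-3r^2-3r-3$. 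Substituting the value of $w^2$ turns this into $(u-w)^2=(r+1)n^2-(r+1)^3-1$. Putting $c=u-w$ gives $u=c+w$, and the three conditions become exactly the system \eqref{eq:shifted-squares}.

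Integrality is immediate. Here $w^2$ and $c^2$ are integers and $w,c$ are rational, so both are integers, and hence $u=c+w\in\ZZ$. For the converse, I reverse the computation. The system \eqref{eq:shifted-squares} gives $H_r(X)-g_t(X)=0$ identically, as in \eqref{eq:identity}. Therefore $(\theta_t-r,(c+w)\theta_t+w)$ lies on $E_n$. Its conjugates lie on the rational line $y=(c+w)(x+r)+w$, so its trace is zero by Lemma~\ref{lem:line}.

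For the finiteness claim, I fix $r\geq2$. It suffices to show that only finitely many integers $n$ occur, since each $n$ leaves at most two choices of $w$, two of $c$, and then $t$ is determined. Multiplying the first two equations gives
\[
 (cw)^2=\bigl(rn^2-(r^3-1)\bigr)\bigl((r+1)n^2-((r+1)^3+1)\bigr)=:F_r(n).
\]
Integer solutions therefore give integral points $(n,cw)$ on $Y^2=F_r(n)$. I will show that $F_r$ is a quartic with four distinct roots. Its roots are $n^2=(r^3-1)/r$ and $n^2=((r+1)^3+1)/(r+1)$. For $r\geq2$, both values are nonzero. They are also distinct, because $(r^3-1)(r+1)\neq r((r+1)^3+1)$: the difference is $-(3r^3+4r^2+4r+1)\ne0$ for $r>0$. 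With four distinct roots, the smooth model of $Y^2=F_r(n)$ has genus one. Siegel's theorem \cite[IX.3, Corollary~3.2.2]{Silverman2009}, applied as in the proof of Theorem~\ref{thm:main-positive-rank}, then gives finitely many integral $n$.

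The main point needing care is this nondegeneracy check, and it is where the hypothesis $r\geq2$ enters. For $r=1$, the first equation becomes $w^2=n^2$. For $r=0$, it becomes $w^2=1$. In both cases a root collapses and the curve becomes rational, so Siegel's theorem no longer applies.
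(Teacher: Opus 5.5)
Your proof is correct and follows the paper's argument essentially step for step: Lemma~\ref{lem:line} reduces trace zero to a rational line, you compare coefficients with $g_t$, substitute $c=u-w$, and apply Siegel's theorem to the genus-one curve through the integral point $(n,cw)$. There is one arithmetic slip in the distinctness check: $(r^3-1)(r+1)-r\bigl((r+1)^3+1\bigr)=-(2r^3+3r^2+3r+1)=-(2r+1)(r^2+r+1)$, not $-(3r^3+4r^2+4r+1)$, which matches the paper's resultant $\bigl((2r+1)(r^2+r+1)\bigr)^2$; it is still nonzero for $r>0$, so your conclusion is unaffected.
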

\begin{proof}
      The conjugate abscissae are distinct, so Lemma~\ref{lem:line} makes trace zero equivalent to $y=u\theta_t+w$ with $u,w\in\QQ$. Comparing the two monic cubics in
      \[
            (X-r)^3-n^2(X-r)-(uX+w)^2=g_t(X)
      \]
      gives
      \[
            t=u^2+3r,\qquad n^2+2uw=u^2+3(r^2+r+1),
            \qquad w^2=rn^2+1-r^3.
      \]
      Thus $u^2,w^2\in\ZZ$, which forces $u,w\in\ZZ$. On setting $c=u-w$, the middle identity becomes the second equation in \eqref{eq:shifted-squares}. Conversely, those equations recover all three coefficient identities. They therefore give a point whose conjugates lie on the rational line $y=u(x+r)+w$, proving trace zero.

      Fix $r\geq2$. Every solution maps to an integral point $(N,Y)=(n,wc)$ on
      \begin{equation}\label{eq:shifted-quartic}
            Y^2=(rN^2+1-r^3)\bigl((r+1)N^2-(r+1)^3-1\bigr).
      \end{equation}
      Both quadratic factors have two distinct roots, and their resultant is
      \[
            \bigl((2r+1)(r^2+r+1)\bigr)^2\ne0.
      \]
      Hence the quartic has four distinct roots. Over $\overline{\QQ}$, projection to the projective $N$-line is a double cover branched at these roots, so the Riemann--Hurwitz formula \cite[II.5, Theorem~5.9]{Silverman2009} gives genus one for its smooth projective model. With the same choice of base field $\QQ$ and ring $R_S=\ZZ$ as above, Siegel's theorem \cite[IX.3, Corollary~3.2.2]{Silverman2009}, applied to the nonconstant function $N$, gives finitely many rational points with integral $N$. For each $n$, there are at most four sign choices for $c,w$, and these determine $t$ and $y$. This proves finiteness.
\end{proof}

At $r=1$, the conditions are $w^2=n^2$ and the Pell equation in Proposition~\ref{thm:trace-pell}; the quartic is singular. For $r\geq2$, finiteness does not give an effective classification, and a point on \eqref{eq:shifted-quartic} need not make each factor a square. The criterion concerns trace-zero points: Theorem~\ref{thm:obstruction} excludes nonzero two-torsion trace only for $r=1$ in this paper.

\begin{example}\label{ex:shift73}
      For $r=73$ and $n=75$, the two square equations give $w^2=147^2$ and $c^2=105^2$. Taking $w=147$ and $c=-105,105$ gives, respectively,
      \[
            (u,t)=(42,1983),\qquad (252,63723).
      \]
      Thus $(\theta_t-73,u\theta_t+147)$ is a trace-zero point on $E_{75}(K_t)$ for either pair. Scaling gives
      \[
            \left(\frac{\theta_t-73}{25},\frac{u\theta_t+147}{125}\right)\in E_3(K_t).
      \]
      These abscissae are not of the form $(\theta_t-1)/d^2$ for a rational $d>0$ with this same chosen generator: comparing coefficients of $\theta_t$ would give $d^2=25$, whereas the constant terms disagree. No completeness assertion for shift $73$ is made here.
\end{example}

For $n=r+2$, the two square conditions become $w^2=(2r+1)^2$ and $c^2=(2r+1)(r+2)$. Varying both $r$ and $d$ and imposing $n=3d^2$ keeps the scaled curve equal to $E_3$ and leads to a negative Pell equation.

\begin{theorem}\label{thm:shifted-E3}
      Define positive integers $A_k,d_k$ by
      \[
            A_k+d_k\sqrt2=(1+\sqrt2)(3+2\sqrt2)^k,
            \qquad k=0,1,2,\ldots,
      \]
      and set
      \[
            r_k=3d_k^2-2,\qquad u_k=3A_k(A_k-d_k),\qquad
            w_k=3A_k^2,\qquad t_k^*=u_k^2+9d_k^2-6.
      \]
      Then
      \[
            P_k=\left(\frac{\theta_{t_k^*}-r_k}{d_k^2},
            \frac{u_k\theta_{t_k^*}+w_k}{d_k^3}\right)\in E_3(K_{t_k^*})
      \]
      has trace zero and infinite order. Its conjugates generate a subgroup of rank two, and the fields $K_{t_k^*}$ are pairwise nonisomorphic. In particular, $\rank E_3(K_{t_k^*})\geq2$ for every $k$.
\end{theorem}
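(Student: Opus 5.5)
The plan is to obtain $P_k$ from Proposition~\ref{prop:shifted-criterion} applied with $r=r_k$ and $n=3d_k^2=r_k+2$, and then to scale from $E_{3d_k^2}$ down to $E_3$. Since $A_k+d_k\sqrt2$ is $(1+\sqrt2)$ times a unit of norm $+1$, it has norm $-1$. Hence
\[
A_k^2-2d_k^2=-1 .
\]
For $n=r+2$, the remark before the theorem says the two square conditions read $w^2=(2r+1)^2$ and $c^2=(2r+1)(r+2)$. With $r=r_k$ we get $2r_k+1=6d_k^2-3=3A_k^2$, so $w=w_k=3A_k^2$ is a valid choice. We also get $c^2=3A_k^2\cdot3d_k^2$, so $c=-3A_kd_k$ is valid. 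Then $u=c+w=3A_k(A_k-d_k)=u_k$, and
\[
t=u_k^2+3r_k=u_k^2+9d_k^2-6=t_k^*.
\]
By Proposition~\ref{prop:shifted-criterion}, $(\theta_{t_k^*}-r_k,\,u_k\theta_{t_k^*}+w_k)\in E_{3d_k^2}(K_{t_k^*})$ has trace zero. The rational isomorphism $(x,y)\mapsto(x/d_k^2,y/d_k^3)$ from the proof of Theorem~\ref{thm:main-fixed} carries this point to $P_k$. Because the isomorphism is defined over $\QQ$, it commutes with $\sigma$ and so preserves trace zero. We also need $t_k^*\ge-1$, which is clear.

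For infinite order and rank two, I will repeat the relevant part of the proof of Proposition~\ref{prop:rank}, which did not really use the shift. The abscissa of $P_k$ has degree three, so it is not $0$ or $\pm3$. By \eqref{eq:torsion}, $P_k$ is therefore nontorsion. Since $\Tr(P_k)=O$, the linear-algebra argument in that proof applies to $v=P_k\otimes1$. The determinant $a^2-ab+b^2$ is positive, so $v$ and $\sigma v$ are independent, and the third conjugate lies in their span. This shows that the conjugates of $P_k$ generate a subgroup of rank two, and hence $\rank E_3(K_{t_k^*})\ge2$.

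For the fields, I will follow the argument after Proposition~\ref{prop:pell-family} and use Okazaki's list as stated by Hoshi. First I show $t_k^*\equiv3\pmod{36}$. Both $A_k$ and $d_k$ are odd: from $A_k^2=2d_k^2-1$, $A_k$ is odd, and reducing the recurrence modulo $2$ shows $d_k$ is odd. So $A_k-d_k$ is even, $6\mid u_k$, and $36\mid u_k^2$. Since $d_k^2\equiv1\pmod4$, we get $9d_k^2-6\equiv3\pmod{36}$. Next I show $t_k^*$ is strictly increasing. Here $A_k$ and $d_k$ increase, and $A_k-d_k$ is nonnegative and increasing, because $(A,d)\mapsto(3A+4d,2A+3d)$ gives $A'-d'=A+d$. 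As a check, $t_0^*=3$ and $t_1^*=1983$, which matches Example~\ref{ex:shift73}. In Okazaki's list only $3$ is congruent to $3$ modulo $36$, so distinct indices give nonisomorphic fields. I expect the only real care to be needed in the parity and monotonicity bookkeeping; the rest is direct substitution.
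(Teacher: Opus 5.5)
Your proposal is correct and follows essentially the same route as the paper. You specialize Proposition~\ref{prop:shifted-criterion} at $n=3d_k^2=r_k+2$ with $w=3A_k^2$ and $c=-3A_kd_k$, then scale to $E_3$, reuse the torsion and independence argument of Proposition~\ref{prop:rank}, and separate the fields via parity, strict monotonicity of $t_k^*$, $t_k^*\equiv3\pmod{36}$, and Okazaki's list.
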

\begin{proof}
      Taking norms in $\QQ(\sqrt2)$ gives $A_k^2-2d_k^2=-1$. Fix $k$ and suppress the subscripts. Put $n=3d^2=r+2$ and $c=-3dA$. Since $2r+1=3A^2$, we have
      \[
            w=2r+1,\qquad c^2=(2r+1)(r+2),\qquad u=c+w.
      \]
      These are exactly \eqref{eq:shifted-squares}, with $t=t_k^*$. Proposition~\ref{prop:shifted-criterion} supplies a trace-zero point on $E_{3d^2}$; scaling by $(x,y)\mapsto(x/d^2,y/d^3)$ gives $P_k$. Equivalently, its conjugates lie on the rational line
      \[
            y=\frac{u}{d}x+\frac{ur+w}{d^3}.
      \]
      The abscissa has degree three, so the torsion theorem \eqref{eq:torsion} makes $P_k$ nontorsion. The independence argument in Proposition~\ref{prop:rank} uses only this fact and the trace-zero relation, so it also gives rank two for the subgroup generated by these conjugates.

      The recurrence is
      \[
            (A_{k+1},d_{k+1})=(3A_k+4d_k,2A_k+3d_k).
      \]
      It preserves positivity and oddness. Also $A_k\geq d_k$, with equality only at $k=0$, and
      \[
            A_{k+1}-d_{k+1}=A_k+d_k>A_k-d_k.
      \]
      Thus $d_k$ and $u_k$ increase strictly, and so does $t_k^*$. Oddness gives $6\mid u_k$ and $d_k^2\equiv1\pmod4$, hence $t_k^*\equiv3\pmod{36}$. The exceptional list in Okazaki's classification, as stated by Hoshi \cite[Theorem~1.4]{Hoshi2011} and displayed in Section~\ref{sec:pell-family}, contains only one parameter congruent to $3$ modulo $36$. Hence the strictly increasing parameters $t_k^*$ define pairwise nonisomorphic fields.
\end{proof}

The first values of $t_k^*$ are $3,1983,2186139,2519293143$. For each $k$, this gives a rank-two subgroup of trace-zero points, without classifying all such points or all shifted points on $E_3$.



\bibliographystyle{alpha}
\bibliography{marked_points_on_congruent_number_curves}
\end{document}